\documentclass[final]{jotart}
\usepackage{amsmath}
\usepackage{amssymb}
\usepackage{graphicx}
%
\theoremstyle{proclaim}
\newtheorem{theorem}{Theorem}[section]
\newtheorem{lemma}[theorem]{Lemma}
\newtheorem{corollary}[theorem]{Corollary}
\newtheorem{proposition}[theorem]{Proposition}
\theoremstyle{statement}
\newtheorem{remark}[theorem]{Remark}
\newtheorem{definition}[theorem]{Definition}

\theoremstyle{fancyproclaim}

\numberwithin{equation}{section}

%
\begin{document}
\issueinfo{00}{0}{0000} 
\commby{Editor}
\pagespan{101}{110}
\date{Month dd, yyyy}
\revision{Month dd, yyyy}
\title[Von Neumann Type of Trace Inequalities for Schatten-Class Operators]{Von Neumann Type of Trace Inequalities for Schatten-Class Operators}
\author[Gunther Dirr {\protect\and} Frederik vom Ende]{Gunther Dirr {\protect
\and} Frederik vom Ende}
\address{GUNTHER DIRR, Institute of Mathematics, University of W\"urzburg, D-97074 W{\"u}rzburg, Germany}
\email{dirr@mathematik.uni-wuerzburg.de}
\address{FREDERIK VOM ENDE, Department of Chemistry, Technische Universit\"at M\"unchen, D-85747 Garching, Germany--and--Munich Centre for Quantum Science and Technology (MCQST), D-80799~M{\"u}nchen, Germany}
\email{frederik.vom-ende@tum.de}
\begin{abstract} 
We generalize von Neumann's well-known trace inequality, as well as related eigenvalue inequalities
for hermitian matrices, to Schatten-class operators between complex Hilbert spaces of infinite dimension.
To this end, we exploit some recent results on the $C$-numerical range of Schatten-class operators
. For the readers' convenience, we sketched the proof of these results in the
Appendix.
\end{abstract}
\begin{subjclass}
47B10, 
15A42, 
47A12 
\end{subjclass}
\begin{keywords}
$C$-numerical range; Schatten-class operators; trace inequality; von Neumann inequality
\end{keywords}
\maketitle

\section{INTRODUCTION}

In the mid thirties of the last century, von Neumann \cite[Thm.~1]{NEUM-37} derived the 
following beautiful and widely used trace inequality for complex $n \times n$ matrices:

\medskip
{ \it
Let $A,B\in\mathbb C^{n\times n}$ with singular values $s_1(A)\geq s_2(A)\geq\ldots\geq s_n(A)$ and 
$s_1(B)\geq s_2(B)\geq\ldots\geq s_n(B)$, respectively, be given. Then
\begin{equation}\label{eq:von_Neumann}
\max_{U,V \in \;\mathcal U_n}|\operatorname{tr}(AUBV)|=\sum\nolimits_{j=1}^ns_j(A)s_j(B)\,,
\end{equation}
where $\mathcal U_n$ denotes the unitary group.}

\medskip
\noindent
In fact, the above result can be reinterpreted as a characterization of the image of the unitary 
double-coset $\{AUBV \,|\,U,V\in\mathbb C^{n\times n}\text{ unitary}\}$ under the trace-functional,
i.e.
\begin{equation}\label{eq:von_Neumann-2}
\{\operatorname{tr}(AUBV)\,|\, U,V \in \mathcal U_n\} = K_r(0)
\end{equation}
with $r := \sum_{j=1}^ns_j(A)s_j(B)$ and $K_r(0)=\{z\in\mathbb C\,,\,|z|\leq r\}$ being the closed disk 
of radius $r$ centred around the origin. This results from the elementary observation that the left-hand
side of \eqref{eq:von_Neumann-2} is circular (simply replace $U$ by $e^{ i \varphi}U$). Another 
well-known consequence of \eqref{eq:von_Neumann}, a von Neumann inequality for hermitian matrices 
\cite[Ch.~9.H.1]{MarshallOlkin}, reads as follows.

\medskip
{ \it
Let $A,B\in\mathbb C^{n\times n}$ hermitian with respective eigenvalues $(\lambda_j(A))_{j=1}^n$ and 
$(\lambda_j(B))_{j=1}^n$ be given. Then
\begin{equation}\label{eq:von_Neumann-hermitian}
\sum\nolimits_{j=1}^n \lambda_j^\downarrow(A)\lambda_j^\uparrow(B) 
\leq \operatorname{tr}(AB)\leq 
\sum\nolimits_{j=1}^n \lambda_j^\downarrow(A)\lambda_j^\downarrow(B)\,,
\end{equation}
where the superindeces $\downarrow$ and $\uparrow$ denote the decreasing and increasing 
sorting of the eigenvalue vectors, respectively.} 

\medskip
\noindent

The area of applications of von Neumann's inequalities and, more generally, singular value 
decompositions (SVD) is enormous. It ranges from operator theory \cite{KyFan51,Schatten50} and 
numerics \cite{GolubLoan} to more applied fields like control theory \cite{HM94},
neural networks \cite{Oja89} as well as quantum dynamics and quantum control \cite{Science98,OLE-95}.
An overview can be found in \cite{MarshallOlkin, Mirsky75}. Now the goal of this short contribution
is to generalize these inequalities to Schatten-class operators on infinite-dimensional Hilbert
spaces. In doing so, some recent results on the $C$-numerical range of Schatten-class
operators \cite{dirr_ve,dirr_ve_schatten} turn out to be quite helpful. For the readers'
convenience, we sketched the corresponding proofs in Appendix \ref{app_A}.

\medskip

This paper is organized as follows: Section \ref{sec:prelim} introduces the key notions and concepts
of this work such as \ref{sec:schatten} Schatten classes, \ref{sec:set_conv} convergence of compact sets
via the Hausdorff metric as well as \ref{sec:c_num_schatten} the $C$-numerical range for Schatten-class
operators. Section \ref{sec:results} then presents the main results as mentioned
above. Appendix \ref{app_A} outlines the outsourced proof of some crucial geometrical results regarding
the $C$-numerical range.

\section{NOTATION AND PRELIMINARIES}\label{sec:prelim}
Unless stated otherwise, here and henceforth $\mathcal X$ and $\mathcal Y$ are arbitrary 
infinite-dimensional complex Hilbert spaces while $\mathcal H$ and $\mathcal G$ are reserved
for infinite-dimensional {\em separable} complex Hilbert spaces. 
Moreover, let $\mathcal B(\mathcal X,\mathcal Y)$, $\mathcal U(\mathcal X,\mathcal Y)$, 
$\mathcal K(\mathcal X,\mathcal Y)$, $\mathcal F(\mathcal X,\mathcal Y)$ and 
$\mathcal B^p(\mathcal X,\mathcal Y)$ denote the set of all bounded, unitary, compact, finite-rank and 
$p$-th Schatten-class operators between $\mathcal X$ and $\mathcal Y$, respectively. As usual, if 
$\mathcal X$ and $\mathcal Y$ coincide we simply write $\mathcal B(\mathcal X)$, $\mathcal U(\mathcal X)$, 
etc.

Scalar products are conjugate linear in the first argument and linear in the second one. For an arbitrary
subset $S \subset \mathbb{C} $, the notations $\overline{S}$ and $\operatorname{conv}(S)$ stand for its 
closure and convex hull, respectively. Finally, given $p,q\in[1,\infty]$, we say $p$ and $q$ are conjugate
if $\frac1p+\frac1q=1$.

\subsection{INFINITE-DIMENSIONAL HILBERT SPACES AND THE SCHATTEN CLASSES}\label{sec:schatten}
For a comprehensive introduction to Hilbert spaces of infinite dimension as well as Schatten-class operators, we refer
to, e.g., \cite{berberian1976,MeiseVogt97en} and \cite{Dunford63}. Here, we recall only some basic results which 
will be used frequently throughout this paper. 
%
\begin{lemma}[Schmidt decomposition]\label{thm_1}
For each $C \in \mathcal K(\mathcal X,\mathcal Y)$, there exists a decreasing null sequence 
$(s_n(C))_{n\in\mathbb N}$ in $[0,\infty)$ as well as orthonormal systems $(f_n)_{n\in\mathbb N}$ in $\mathcal X$
and $(g_n)_{n\in\mathbb N}$ in $\mathcal Y$ such that
\begin{equation}\label{eq:Schmidt}
C = \sum\nolimits_{n=1}^\infty s_n(C)\langle f_n,\cdot\rangle g_n\,,
\end{equation}
where the series converges in the operator norm.
\end{lemma}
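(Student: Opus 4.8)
The plan is to reduce everything to the spectral theorem for the positive compact operator $C^*C\in\mathcal K(\mathcal X)$. First I would recall that $C^*C$ is self-adjoint, positive and compact, so by the spectral theorem for compact self-adjoint operators there exist an orthonormal system $(f_n)_{n\in\mathbb N}$ in $\mathcal X$ and a decreasing null sequence $(\mu_n)_{n\in\mathbb N}$ in $[0,\infty)$ (the eigenvalues, repeated according to multiplicity and padded with zeros if $C^*C$ has finite-dimensional range) such that $f_n$ is an eigenvector of $C^*C$ for the eigenvalue $\mu_n$ and $C^*Cx=\sum_{n=1}^\infty\mu_n\langle f_n,x\rangle f_n$ for all $x\in\mathcal X$. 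Putting $s_n(C):=\sqrt{\mu_n}$ already yields the desired decreasing null sequence in $[0,\infty)$.

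Next I would construct the orthonormal system $(g_n)_{n\in\mathbb N}$ in $\mathcal Y$. For every index $n$ with $s_n(C)>0$ I set $g_n:=s_n(C)^{-1}Cf_n$; from $\langle Cf_m,Cf_n\rangle=\langle f_m,C^*Cf_n\rangle=\mu_n\langle f_m,f_n\rangle$ one reads off $\langle g_m,g_n\rangle=\delta_{mn}$, so these vectors form an orthonormal system, which — using that $\mathcal Y$ is infinite-dimensional — I extend arbitrarily to an orthonormal system indexed by all of $\mathbb N$ (the added vectors are immaterial, as they will carry coefficient $0$). To check \eqref{eq:Schmidt}, fix $x\in\mathcal X$ and set $x_0:=x-\sum_n\langle f_n,x\rangle f_n$ (the series converges by Bessel's inequality). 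The spectral expansion gives $C^*Cx_0=0$, whence $\|Cx_0\|^2=\langle x_0,C^*Cx_0\rangle=0$ and thus $Cx_0=0$; moreover $Cf_n=0$ whenever $s_n(C)=0$, since $\|Cf_n\|^2=\mu_n$. Therefore $Cx=\sum_n\langle f_n,x\rangle Cf_n=\sum_{n:\,s_n(C)>0}s_n(C)\langle f_n,x\rangle g_n$, which is precisely \eqref{eq:Schmidt} evaluated at $x$.

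For convergence in the operator norm I would estimate the tails directly. With $C_N:=\sum_{n=1}^N s_n(C)\langle f_n,\cdot\rangle g_n$, orthonormality of the $g_n$ together with Bessel's inequality for the $f_n$ give, for every $x$ with $\|x\|\le1$,
\[
\bigl\|(C-C_N)x\bigr\|^2=\sum_{n>N}s_n(C)^2\,|\langle f_n,x\rangle|^2\le s_{N+1}(C)^2\sum_{n>N}|\langle f_n,x\rangle|^2\le s_{N+1}(C)^2,
\]
so $\|C-C_N\|\le s_{N+1}(C)\to0$ because $(s_n(C))_{n\in\mathbb N}$ is a null sequence.

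The only genuinely non-elementary ingredient is the spectral theorem for compact self-adjoint operators, which I would simply cite from one of the references; all remaining steps are routine verifications. The one point that deserves a little attention is the bookkeeping when $C$ has finite rank, i.e.\ only finitely many $s_n(C)$ are positive: then all the sums above are finite, norm convergence is immediate, and the orthonormal system $(g_n)_{n\in\mathbb N}$ is completed using the infinite-dimensionality of $\mathcal Y$ exactly as before.
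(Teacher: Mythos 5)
Your argument is correct: it is the standard derivation of the Schmidt (singular value) decomposition via the spectral theorem for the compact, positive, self-adjoint operator $C^{*}C$, with the tail estimate $\|C-C_N\|\le s_{N+1}(C)$ giving operator-norm convergence. The paper does not prove this lemma at all --- it is recalled as a known fact with a pointer to the standard references --- and your proof is exactly the textbook argument those references contain, including the correct bookkeeping in the finite-rank case and the use of the infinite-dimensionality of $\mathcal Y$ to complete $(g_n)_{n\in\mathbb N}$ to a full orthonormal system.
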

As the \emph{singular numbers} $(s_n(C))_{n\in\mathbb N}$ in Lemma \ref{thm_1} are uniquely determined by $C$,
the \emph{$p$-th Schatten-class} $\mathcal B^p(\mathcal X,\mathcal Y)$ is (well-)defined via
\begin{align*}
\mathcal B^p(\mathcal X,\mathcal Y)
:= \Big\lbrace C \in\mathcal K(\mathcal X,\mathcal Y)\,\Big|\,\sum\nolimits_{n=1}^\infty s_n(C)^p<\infty\Big\rbrace
\end{align*}
for $p\in [1,\infty)$. The Schatten-$p$-norm
\begin{align*}
\|C\|_p := \Big(\sum\nolimits_{n=1}^\infty s_n(C)^p\Big)^{1/p}
\end{align*}
turns $\mathcal B^p(\mathcal X,\mathcal Y)$ into a Banach space. Moreover, for $p=\infty$, we identify
$\mathcal B^\infty (\mathcal X,\mathcal Y)$ with the set of all compact operators 
$\mathcal K(\mathcal X,\mathcal Y)$ equipped with the norm
\begin{align*}
\|C\|_\infty := \sup_{n \in \mathbb{N}}s_n(C) = s_1(C)\,.
\end{align*}
Note that $\|C\|_\infty$ coincides with the ordinary operator norm $\|C\|$. Hence 
$\mathcal B^\infty (\mathcal X,\mathcal Y)$ constitutes a closed subspace of 
$\mathcal B (\mathcal X,\mathcal Y)$ and thus a Banach space, too. 

\begin{remark}\label{rem_Schatten_p}
Evidently, if $C\in\mathcal B^p(\mathcal X,\mathcal Y)$ for some $p\in[1,\infty]$ then the series
\eqref{eq:Schmidt} converges in the Schatten-$p$-norm. 
\end{remark}

The following results can be found in \cite[Coro.~XI.9.4 \& Lemma XI.9.9]{Dunford63}.

\begin{lemma}\label{lemma_10}
\begin{itemize}
\item[(a)] Let $p\in [1,\infty]$. Then for all $S,T\in\mathcal B(\mathcal X)$, $C\in\mathcal B^p(\mathcal X)$:
\begin{align*}
\|SCT\|_p\leq\Vert S\Vert\|C\|_p\Vert T\Vert\,.
\end{align*}
\item[(b)] Let $1\leq p\leq q\leq\infty$. Then
$\mathcal B^p(\mathcal X,\mathcal Y)\subseteq\mathcal B^q(\mathcal X,\mathcal Y)$ and 
$\|C\|_p\geq\|C\|_q$ for all $C\in\mathcal B^p(\mathcal X,\mathcal Y)$.
\end{itemize}
\end{lemma}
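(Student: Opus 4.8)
The plan is to reduce both assertions to elementary statements about the singular-number sequence $(s_n(C))_{n\in\mathbb N}$, exploiting the variational description of the singular numbers as approximation numbers: for $C\in\mathcal K(\mathcal X)$ and $n\in\mathbb N$,
\[
s_n(C)=\inf\bigl\{\|C-F\|\ :\ F\in\mathcal F(\mathcal X),\ \operatorname{rank}(F)<n\bigr\}.
\]
This identity follows from the Schmidt decomposition in Lemma~\ref{thm_1} --- truncating the series \eqref{eq:Schmidt} after $n-1$ terms realizes the infimum --- and it turns the behaviour of singular numbers under composition with bounded operators into a one-line argument.

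For part (a), I would first establish $s_n(SCT)\le\|S\|\,\|T\|\,s_n(C)$ for every $n$. Given any $F\in\mathcal F(\mathcal X)$ with $\operatorname{rank}(F)<n$, the operator $SFT$ again has rank $<n$, and
\[
\|SCT-SFT\|=\|S(C-F)T\|\le\|S\|\,\|C-F\|\,\|T\|;
\]
taking the infimum over such $F$ and invoking the displayed characterization for both $SCT$ and $C$ yields the claimed pointwise bound. (Alternatively one splits it as $s_n(AC)\le\|A\|\,s_n(C)$ together with $s_n(CB)=s_n\bigl((CB)^*\bigr)\le\|B\|\,s_n(C)$, using $s_n(D)=s_n(D^*)$.) Summing the $p$-th powers over $n$ for $p\in[1,\infty)$, respectively passing to the supremum over $n$ for $p=\infty$, then gives $\|SCT\|_p\le\|S\|\,\|T\|\,\|C\|_p$, and in particular $SCT\in\mathcal B^p(\mathcal X)$.

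For part (b), the content is the familiar nesting $\ell^p\subseteq\ell^q$ for $p\le q$, applied to the nonincreasing null sequence $(s_n(C))_{n\in\mathbb N}$. If $C\neq 0$ one normalizes so that $\|C\|_p=1$; then $s_n(C)\le 1$ for all $n$, hence $s_n(C)^q\le s_n(C)^p$ whenever $p\le q<\infty$, and summing (respectively taking the supremum when $q=\infty$) gives $\|C\|_q\le 1=\|C\|_p$. Homogeneity of the Schatten norms removes the normalization, and the inequality $\|C\|_q\le\|C\|_p<\infty$ simultaneously shows $C\in\mathcal B^q(\mathcal X,\mathcal Y)$, i.e. $\mathcal B^p(\mathcal X,\mathcal Y)\subseteq\mathcal B^q(\mathcal X,\mathcal Y)$.

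Neither step presents a genuine obstacle; the only point deserving care is justifying the approximation-number formula for the singular numbers, i.e. that the truncated Schmidt series is an optimal finite-rank approximant. This rests on the inequality $\|C-F\|\ge s_n(C)$ for every $F$ with $\operatorname{rank}(F)<n$, which one proves by picking a unit vector in the intersection of $\ker F$ with the span of the first $n$ right singular vectors $f_1,\dots,f_n$ and evaluating $\|(C-F)x\|$. Everything downstream is bookkeeping with $\ell^p$-norms, and the cases $p=\infty$ or $q=\infty$ are handled uniformly by replacing sums with suprema.
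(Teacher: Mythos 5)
Your argument is correct. Note, however, that the paper does not prove this lemma at all: it simply cites \cite[Coro.~XI.9.4 \& Lemma~XI.9.9]{Dunford63}, so there is no ``paper proof'' to match against. What you supply is the standard self-contained textbook argument, and it goes through. The one point you rightly flag as needing care --- the identification of $s_n(C)$ with the $n$-th approximation number $\inf\{\|C-F\| : \operatorname{rank}(F)<n\}$ --- is handled correctly on both sides: truncation of the Schmidt series \eqref{eq:Schmidt} gives the upper bound with value $s_n(C)$ (the tail has norm $s_n(C)$ since the singular numbers are decreasing), and for the lower bound the intersection of $\ker F$ with $\operatorname{span}\{f_1,\dots,f_n\}$ is nontrivial by a dimension count, and evaluating $\|Cx\|=\bigl(\sum_{j=1}^n s_j(C)^2|\langle f_j,x\rangle|^2\bigr)^{1/2}\geq s_n(C)$ on a unit vector $x$ there finishes it. From that point on, (a) is the inclusion $\{SFT : \operatorname{rank}(F)<n\}\subseteq\{G : \operatorname{rank}(G)<n\}$ plus submultiplicativity of the operator norm, and (b) is the $\ell^p\subseteq\ell^q$ nesting after normalizing $\|C\|_p=1$; both are exactly as routine as you claim, with the $p=\infty$ and $q=\infty$ cases degenerating to $\|C\|_\infty=s_1(C)=\|C\|$. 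I see no gap.
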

\noindent Note that due to (a), all Schatten-classes $\mathcal B^p(\mathcal X)$ constitute--just
like the compact operators--a two-sided ideal in the $C^*$-algebra of all bounded operators
$\mathcal B(\mathcal X)$.\medskip

Now for any $C\in\mathcal B^1(\mathcal X)$, the trace of $C$ is defined via
\begin{align}\label{eq:trace}
\operatorname{tr}(C):=\sum\nolimits_{i\in I}\langle f_i,Cf_i\rangle\,,
\end{align}
where $(f_i)_{i\in I}$ can be any orthonormal basis of $\mathcal X$. The trace is well-defined, as
one can show that the right-hand side of \eqref{eq:trace} is finite and does not depend on the choice of 
$(f_i)_{i\in I}$. Important properties are the following, cf. \cite[Lemma XI.9.14]{Dunford63}.

\begin{lemma}\label{lemma_nu_hoelder}
Let $C\in\mathcal B^p(\mathcal X,\mathcal Y)$ and $T\in\mathcal B^q(\mathcal Y,\mathcal X)$ with 
$p,q\in [1,\infty]$ conjugate. Then one has $CT\in\mathcal B^1(\mathcal Y)$ and $TC\in\mathcal B^1(\mathcal X)$
with
$$
\operatorname{tr}(CT)=\operatorname{tr}(TC)\quad\text{and}\quad |\operatorname{tr}(CT)|\leq \|C\|_p\|T\|_q\,.
$$
\end{lemma}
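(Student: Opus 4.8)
The plan is to prove the Hölder-type inequality $|\operatorname{tr}(CT)|\le\|C\|_p\|T\|_q$ together with the ideal/trace statements by first reducing to finite-rank approximants and then invoking the classical finite-dimensional Hölder inequality for Schatten norms. First I would fix Schmidt decompositions $C=\sum_n s_n(C)\langle f_n,\cdot\rangle g_n$ and $T=\sum_m s_m(T)\langle h_m,\cdot\rangle e_m$ from Lemma~\ref{thm_1}, and set $C_N:=\sum_{n=1}^N s_n(C)\langle f_n,\cdot\rangle g_n$, $T_N:=\sum_{m=1}^N s_m(T)\langle h_m,\cdot\rangle e_m$. By Remark~\ref{rem_Schatten_p}, $C_N\to C$ in $\|\cdot\|_p$ and $T_N\to T$ in $\|\cdot\|_q$ (when $p$ or $q$ equals $\infty$ one only has operator-norm convergence of the corresponding truncation, but that is exactly $\|\cdot\|_\infty$ and still suffices below). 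Since $C_NT_N$ has finite rank, its trace is well defined and, by choosing an orthonormal basis adapted to the finite-dimensional subspace $\operatorname{span}\{f_n,g_n,h_m,e_m:n,m\le N\}$, the trace reduces to a finite matrix trace, so the classical von Neumann / Hölder inequality in finite dimensions yields $|\operatorname{tr}(C_NT_N)|\le\|C_N\|_p\|T_N\|_q\le\|C\|_p\|T\|_q$, using Lemma~\ref{lemma_10}(a)/(b) to control the truncated norms.

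Next I would show $CT\in\mathcal B^1(\mathcal Y)$ and $\operatorname{tr}(C_NT_N)\to\operatorname{tr}(CT)$. For membership, write $CT-C_NT_N=(C-C_N)T+C_N(T-T_N)$ and estimate the Schatten-$1$ norm of each summand by Lemma~\ref{lemma_10}(a) in the form $\|AB\|_1\le\|A\|_p\|B\|_q$ — which itself follows from the same finite-rank reduction, or can be quoted from the cited \cite[Ch.~XI.9]{Dunford63}; this gives $\|CT-C_NT_N\|_1\le\|C-C_N\|_p\|T\|_q+\|C_N\|_p\|T-T_N\|_q\to0$, so $(C_NT_N)$ is Cauchy in $\mathcal B^1(\mathcal Y)$ and its limit is $CT$, whence $CT\in\mathcal B^1(\mathcal Y)$. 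Since the trace is $\|\cdot\|_1$-continuous (it is a bounded linear functional on $\mathcal B^1$, with $|\operatorname{tr}(R)|\le\|R\|_1$), we get $\operatorname{tr}(C_NT_N)\to\operatorname{tr}(CT)$, and passing to the limit in the finite-rank estimate yields $|\operatorname{tr}(CT)|\le\|C\|_p\|T\|_q$. The symmetric argument (with the roles of $\mathcal X,\mathcal Y$ and $p,q$ swapped) gives $TC\in\mathcal B^1(\mathcal X)$.

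Finally, for $\operatorname{tr}(CT)=\operatorname{tr}(TC)$ I would again use the finite-rank approximants: $\operatorname{tr}(C_NT_N)$ and $\operatorname{tr}(T_NC_N)$ agree because for finite-rank (indeed finite matrices, after the adapted-basis reduction) operators the cyclicity of the trace is elementary. Both sides converge — the left to $\operatorname{tr}(CT)$ as above, the right to $\operatorname{tr}(TC)$ by the mirror-image argument — so the identity passes to the limit.

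The main obstacle is bookkeeping rather than depth: one must handle the endpoint cases $p=\infty$ or $q=\infty$ carefully, since there the truncations of $C$ or $T$ converge only in operator norm, not in a ``genuine'' $\|\cdot\|_p$ with $p<\infty$; but as $\|\cdot\|_\infty$ \emph{is} the operator norm this is consistent, and the inequality $\|AB\|_1\le\|A\|_\infty\|B\|_1$ (a special case of Lemma~\ref{lemma_10}(a)) still drives the Cauchy estimate. A second, purely notational, point of care is the adapted-orthonormal-basis reduction that identifies $\operatorname{tr}(C_NT_N)$ with a finite matrix trace so that the classical inequality \eqref{eq:von_Neumann} — more precisely its Hölder corollary — can be applied verbatim; everything else is a routine limiting argument.
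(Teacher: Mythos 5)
The paper does not actually prove this lemma: it is quoted directly from \cite[Lemmas XI.9.9 and XI.9.14]{Dunford63}, so there is no in-paper argument to compare against. Your proof is a legitimate self-contained alternative: truncate the Schmidt series, reduce $\operatorname{tr}(C_NT_N)$ to a finite matrix trace on an adapted finite-dimensional subspace, apply the finite-dimensional von Neumann inequality \eqref{eq:von_Neumann} together with discrete H\"older to get $|\operatorname{tr}(C_NT_N)|\leq\|C\|_p\|T\|_q$, and pass to the limit. This is exactly the approximation pattern the paper itself uses for its main results (Lemma \ref{lemma_S_C_finite_rank} combined with Proposition \ref{prop_1}), so it fits the paper's toolkit; what the citation buys is brevity, while your route exhibits the infinite-dimensional H\"older/trace inequality as a corollary of the finite-dimensional von Neumann inequality rather than as an imported black box.

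One step needs tightening. For the membership claim $CT\in\mathcal B^1(\mathcal Y)$ you write $\|CT-C_NT_N\|_1\leq\|C-C_N\|_p\|T\|_q+\|C_N\|_p\|T-T_N\|_q$, which already presupposes that $(C-C_N)T$ is trace class --- i.e.\ the very operator H\"older inequality you are in the middle of establishing; moreover Lemma \ref{lemma_10}(a) as stated only gives $\|SCT\|_p\leq\|S\|\,\|C\|_p\,\|T\|$, not $\|AB\|_1\leq\|A\|_p\|B\|_q$, so the citation does not cover that step. The clean version, which you also sketch, is: apply the finite-rank H\"older inequality to the differences $C_N-C_M$ and $T_N-T_M$ (all finite rank, so the matrix reduction is legitimate) to show $(C_NT_N)_N$ is Cauchy in $\|\cdot\|_1$, invoke completeness of $\mathcal B^1(\mathcal Y)$, and identify the limit with $CT$ via $\|\cdot\|_1\geq\|\cdot\|_\infty$ (Lemma \ref{lemma_10}(b)) and the operator-norm convergence $C_NT_N\to CT$. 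With the argument rearranged in that order everything goes through, including the endpoint cases $p\in\{1,\infty\}$ and the cyclicity identity, which does reduce to the elementary finite-rank computation you describe.
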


In order to recap the well-known diagonalization result for compact normal operators, we first have to
fix the term \emph{eigenvalue sequence} of a compact operator $T \in \mathcal K(\mathcal H)$. In general,
it is obtained by arranging the (necessarily countably many) non-zero eigenvalues in decreasing order
with respect to their absolute value and each eigenvalue is repeated as many times as its algebraic
multiplicity\footnote{By \cite[Prop.~15.12]{MeiseVogt97en}, every non-zero element $\lambda \in \sigma(T)$
of the spectrum of $T$ is an eigenvalue of $T$ and has a well-defined finite algebraic multiplicity 
$\nu_a(\lambda)$, e.g., $\nu_a(\lambda) := \dim \ker (T - \lambda I)^{n_0}$, where $n_0 \in \mathbb N$ 
is the smallest natural number $n \in \mathbb N$ such that $\ker (T - \lambda I)^n = \ker (T - \lambda I)^{n+1}$.
\label{footnote_alg_mult}} calls for. If only finitely many non-vanishing eigenvalues exist,
then the sequence is filled up with zeros, see \cite[Ch.~15]{MeiseVogt97en}. For our purposes, we have to 
pass to a slightly \emph{modified eigenvalue sequence} as follows: 

\begin{itemize}
\item 
If the range of $T$ is infinite-dimensional and the kernel of $T$ is finite-dimensional,
then put $\operatorname{dim}(\operatorname{ker}T)$ zeros at the beginning of the eigenvalue
sequence of $T$. \vspace{4pt}
\item 
If the range and the kernel of $T$ are infinite-dimensional, mix infinitely many 
zeros into the eigenvalue sequence of $T$.

Because in Definition
\ref{defi_3} arbitrary permutations will be applied to the modified eigenvalue sequence, we do not
need to specify this mixing procedure further, cf. also \cite[Lemma 3.6]{dirr_ve}.\vspace{4pt}
\item
If the range of $T$ is finite-dimensional leave the eigenvalue sequence of $T$ unchanged. 
\end{itemize}

\begin{lemma}[\cite{berberian1976}, Thm.~VIII.4.6]\label{lemma_berb_4_6}
Let $T\in\mathcal K(\mathcal H)$ be normal, i.e.~$T^\dagger T=TT^\dagger$. Then there exists an 
orthonormal basis $(e_n)_{n\in\mathbb N}$ of $\mathcal H$ such that
\begin{align*}
T=\sum\nolimits_{j=1}^\infty \lambda_j(T)\langle e_j,\cdot\rangle e_j
\end{align*}
where $(\lambda_j(T))_{j\in\mathbb N}$ is the modified eigenvalue sequence of $T$.
\end{lemma}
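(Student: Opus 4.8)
The plan is to reduce the normal case to the (essentially classical) spectral theorem for a single compact self\-adjoint operator. Write $A := \frac{1}{2}(T+T^\dagger)$ and $B := \frac{1}{2i}(T-T^\dagger)$; then $A,B\in\mathcal K(\mathcal H)$ are self-adjoint, $T=A+iB$, and---crucially---normality of $T$ gives $AB=BA$. Hence it suffices to produce an orthonormal basis of $\mathcal H$ consisting of common eigenvectors of $A$ and $B$, since any unit vector $e$ with $Ae=\alpha e$ and $Be=\beta e$ is an eigenvector of $T$ for the eigenvalue $\alpha+i\beta$.

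First I would recall the diagonalization of a single compact self-adjoint $S\in\mathcal K(\mathcal H)$. The analytic core is that $\|S\|=\sup_{\|x\|=1}|\langle x,Sx\rangle|$ and that this supremum is attained at some unit vector (weak compactness of the unit ball together with norm-continuity of $S$ on bounded sets), which, using $S=S^\dagger$, turns out to be an eigenvector for the eigenvalue $\pm\|S\|$. Peeling off this eigenline and iterating on its orthogonal complement yields an orthonormal system of eigenvectors whose eigenvalues tend to $0$ in modulus, each nonzero eigenvalue occurring with finite multiplicity; on the orthogonal complement of the closed span of all these eigenvectors $S$ has norm $0$, so adjoining an arbitrary orthonormal basis of $\ker S$ produces an orthonormal basis of $\mathcal H$ (separable, hence countable) that diagonalizes $S$.

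To diagonalize $A$ and $B$ simultaneously I would exploit $AB=BA$: then $B$ leaves invariant every eigenspace $E_\alpha:=\ker(A-\alpha I)$ as well as $\ker A$. For $\alpha\neq0$ the space $E_\alpha$ is finite-dimensional and $B|_{E_\alpha}$ is self-adjoint, hence orthogonally diagonalizable by the finite-dimensional spectral theorem; on $\ker A$ the restriction $B|_{\ker A}$ is compact self-adjoint, so the previous step applies. Collecting these eigenbases over the (countably many) eigenspaces of $A$ produces an orthonormal basis $(e_n)_{n\in\mathbb N}$ with $Ae_n=\alpha_n e_n$ and $Be_n=\beta_n e_n$, hence $Te_n=\lambda_n e_n$ for $\lambda_n:=\alpha_n+i\beta_n$. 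Since $\lambda_n\to0$, the partial sums of $\sum_n\lambda_n\langle e_n,\cdot\rangle e_n$ converge to $T$ in operator norm, the tail being bounded by $\sup_{m\ge N}|\lambda_m|$.

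It remains to match $(\lambda_n)_{n\in\mathbb N}$ with the prescribed \emph{modified eigenvalue sequence}. The key point is that for a normal operator one has $\ker(T-\lambda I)^k=\ker(T-\lambda I)$ for every $k\geq1$, so the algebraic multiplicity of each nonzero eigenvalue (cf.\ footnote~\ref{footnote_alg_mult}) equals its geometric multiplicity; therefore the nonzero $\lambda_n$ obtained above, counted with multiplicity, constitute exactly the nonzero part of the eigenvalue sequence of $T$. The remaining $e_n$ span $\ker T$ and supply the zeros, and there are precisely the right number of them in each case of the definition ($\dim\ker T$ many when that is finite, countably infinitely many otherwise). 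Reordering the $e_n$ so that the indexing matches the modified eigenvalue sequence only permutes the summands of an unconditionally convergent series, so the displayed identity persists. I expect the main obstacle to be not the normal-to-self-adjoint reduction but the two analytic inputs beneath it: that $\pm\|S\|$ is a genuine eigenvalue of a compact self-adjoint $S$, and the multiplicity bookkeeping that identifies the abstract eigenbasis with the combinatorially defined modified eigenvalue sequence.
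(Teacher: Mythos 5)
Your argument is correct, but note that the paper does not actually prove this lemma---it is quoted verbatim from Berberian (Thm.~VIII.4.6) as a known black box, so there is no in-paper proof to compare against. Your route (Cartesian decomposition $T=A+iB$ with commuting compact self-adjoint parts, simultaneous diagonalization via $B$-invariance of the $A$-eigenspaces, then the multiplicity bookkeeping) is a standard and complete alternative to the other textbook route, which treats the normal operator directly by producing an eigenvector of modulus $\|T\|$ from the identity $r(T)=\|T\|$ for normal operators and peeling off eigenspaces of $T$ itself; your reduction trades that spectral-radius input for the more elementary variational characterization $\|S\|=\sup_{\|x\|=1}|\langle x,Sx\rangle|$ of a self-adjoint operator, at the cost of the extra (easy) commutation step. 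The two points you flag as the real content are indeed the right ones, and you handle both: attainment of $\pm\|S\|$ as an eigenvalue via weak compactness plus complete continuity, and the identification with the \emph{modified} eigenvalue sequence, where your observation that $\ker(T-\lambda I)^k=\ker(T-\lambda I)$ for normal $T$ (so algebraic equals geometric multiplicity, consistent with footnote~\ref{footnote_alg_mult}) and your case count of the zero eigenvectors against $\dim\ker T$ are exactly what is needed; the final reordering is harmless because the set $\{n:|\lambda_n|>\varepsilon\}$ is finite for every $\varepsilon>0$, so the series converges unconditionally in operator norm.
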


\subsection{SET CONVERGENCE}\label{sec:set_conv}

In order to transfer results about convexity and star-shapedness of the $C$-numerical
range from matrices to Schatten-class operators, we need a concept of set convergence. 
We will use the Hausdorff metric on compact subsets (of $\mathbb C$) and the associated notion
of convergence, see, e.g., \cite{nadler1978}.\medskip

The distance between $z \in \mathbb C$ and a non-empty compact subset $A \subseteq \mathbb C$ is
given by $d(z,A) := \min_{w \in A} d(z,w) = \min_{w \in A} |z-w|$, based on which the \emph{Hausdorff metric} $\Delta$ on the set of all non-empty
compact subsets of $\mathbb C$ is defined via 
\begin{align*}
\Delta(A,B) := \max\Big\lbrace \max_{z \in A}d(z,B),\max_{z \in B}d(z,A) \Big\rbrace\,.
\end{align*}
The following characterization of the Hausdorff metric is readily verified.

\begin{lemma}\label{lemma_11}
Let $A,B \subset \mathbb C$ be two non-empty compact sets and let $\varepsilon > 0$. 
Then $\Delta(A,B) \leq \varepsilon$ if and only if for all $z \in A$, there exists $w \in B$ 
with $d(z,w) \leq \varepsilon$ and vice versa.
\end{lemma}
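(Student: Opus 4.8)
The plan is to unwind the definition of $\Delta$ term by term and reduce everything to the behaviour of the point-to-set distance $d(\cdot,B)$ on the compact set $A$ (and, symmetrically, $d(\cdot,A)$ on $B$). First I would record the elementary preliminary that, since $B$ is non-empty and compact and $w\mapsto|z-w|$ is continuous, the infimum defining $d(z,B)$ is attained; consequently $d(z,B)\leq\varepsilon$ is \emph{equivalent} to the existence of some $w\in B$ with $|z-w|\leq\varepsilon$. I would also note that $z\mapsto d(z,B)$ is $1$-Lipschitz, hence continuous, so that $\max_{z\in A}d(z,B)$ is a genuine maximum over the compact set $A$ (and likewise with the roles of $A$ and $B$ exchanged); this guarantees that the quantifier manipulations below are legitimate.

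With these preliminaries in place the argument is a short bookkeeping exercise. By definition, $\Delta(A,B)\leq\varepsilon$ holds precisely when both $\max_{z\in A}d(z,B)\leq\varepsilon$ and $\max_{z\in B}d(z,A)\leq\varepsilon$. The first of these says that $d(z,B)\leq\varepsilon$ for every $z\in A$, which by the preliminary remark is exactly the statement: for every $z\in A$ there exists $w\in B$ with $|z-w|\leq\varepsilon$. Symmetrically, $\max_{z\in B}d(z,A)\leq\varepsilon$ is exactly the ``vice versa'' clause, i.e.\ for every $z\in B$ there exists $w\in A$ with $|z-w|\leq\varepsilon$. Combining the two equivalences yields the asserted characterization in both directions.

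The only point that needs a little care — and the closest thing to an obstacle here — is ensuring that all the extrema involved are actually attained, so that replacing ``$\max\leq\varepsilon$'' by ``for all\,\dots'' and ``$\min\leq\varepsilon$'' by ``there exists\,\dots'' is valid; this is precisely where the non-emptiness and compactness of $A$ and $B$, together with the continuity (indeed Lipschitz continuity) of the distance function, are used. Beyond that, the proof is purely formal.
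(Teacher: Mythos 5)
Your proof is correct and is exactly the routine unwinding of the definition that the paper has in mind (the paper omits the argument entirely, calling the lemma ``readily verified''). The one substantive point — that compactness of $B$ and continuity of $w\mapsto|z-w|$ make the minimum defining $d(z,B)$ attained, so that $d(z,B)\le\varepsilon$ is genuinely equivalent to the existence of $w\in B$ with $|z-w|\le\varepsilon$ — is correctly identified and handled.
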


With this metric one can introduce the notion of convergence for sequences $(A_n)_{n\in\mathbb N}$ 
of non-empty compact subsets of $\mathbb C$ such that the maximum- as well as the minimum-operator
are continuous in the following sense.

\begin{lemma}\label{lemma_lim_max}
Let $(A_n)_{n\in\mathbb N}$ be a bounded sequence of non-empty, compact subsets of $\mathbb R$ 
which converges to $A\subset\mathbb R$. Then the sequences of real numbers $(\max A_n)_{n\in\mathbb N}$
and $(\min A_n)_{n\in\mathbb N}$ are convergent with 
\begin{align*}
\lim_{n\to\infty}(\max A_n) 
=\max A \quad\text{and}\quad \lim_{n\to\infty}(\min A_n)=\min A\,.
\end{align*}
\end{lemma}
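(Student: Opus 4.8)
The plan is to reduce the claim to the elementary fact that on $\mathbb{R}$ the sup-functional is continuous with respect to the Hausdorff metric, using the characterization in Lemma \ref{lemma_11}. Fix $\varepsilon>0$ and write $\delta_n:=\Delta(A_n,A)$, so that $\delta_n\to 0$. For the maximum: let $a:=\max A$ (which exists since $A$ is non-empty and compact) and let $a_n:=\max A_n$ (which exists for the same reason). By Lemma \ref{lemma_11}, for any index $n$ there is a point $w\in A_n$ with $|a-w|\le\delta_n$, hence $a_n\ge w\ge a-\delta_n$; conversely there is a point $w'\in A$ with $|a_n-w'|\le\delta_n$, hence $a_n\le w'+\delta_n\le a+\delta_n$. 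Thus $|a_n-a|\le\delta_n$ for all $n$, and since $\delta_n\to 0$ we get $\max A_n\to\max A$.

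The argument for the minimum is entirely symmetric: replacing $\max$ by $\min$ in the two inequalities above (and the roles of lower and upper bounds) yields $|\min A_n-\min A|\le\delta_n\to 0$. Alternatively one may simply observe that $\min A_n=-\max(-A_n)$ and that the reflection map $S\mapsto -S$ is an isometry of the Hausdorff metric, so that $-A_n\to -A$ and the minimum-statement follows from the maximum-statement. Either route is a two-line consequence of what has already been established.

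I do not anticipate a genuine obstacle here; the only points that need a word of care are the existence of the maxima and minima (guaranteed by non-emptiness and compactness of each $A_n$ and of $A$) and the fact that convergence $A_n\to A$ in the Hausdorff metric means precisely $\Delta(A_n,A)\to 0$, so that the bound $|\max A_n-\max A|\le\Delta(A_n,A)$ does the job. The hypothesis that the sequence $(A_n)$ is bounded is not strictly needed for the conclusion once $A$ is known to be compact, but it is harmless and consistent with the ambient framework in which all sets under consideration are compact subsets of a fixed bounded region.
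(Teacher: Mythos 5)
Your proof is correct and follows essentially the same route as the paper: both apply Lemma \ref{lemma_11} in each direction to obtain the two-sided bound $|\max A_n - \max A| \le \Delta(A_n,A)$ and treat the minimum symmetrically. Your packaging of this as the single inequality $|\max A_n-\max A|\le\Delta(A_n,A)$ (and the observation that boundedness of the sequence is not really needed) is a slightly cleaner statement of the same argument.
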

\begin{proof}
Let $\varepsilon>0$. By assumption, there exists $N\in\mathbb N$ such that $\Delta(A_n,A)<\varepsilon$ 
for all $n\geq N$. Hence by Lemma \ref{lemma_11} one finds $a_n\in A_n$ with 
$|\max A - a_n| < \varepsilon$ and thus
$
\max A < a_n + \varepsilon < \max A_n + \varepsilon\,.
$
Similarly, there exists $a \in A$ such that $|\max A_n - a| < \varepsilon$, so
$
\max A_n < a + \varepsilon < \max A + \varepsilon\,.
$
Combining both estimates, we get $|\max A - \max A_n| < \varepsilon$. The case of the minimum 
is shown analogously.
\end{proof}

\subsection{THE $C$-NUMERICAL RANGE OF SCHATTEN-CLASS OPERATORS}\label{sec:c_num_schatten}

In this subsection, we present a few approximation results and collect some material on the 
$C$-numerical range of Schatten-class operators which is of fundamental importance in Section \ref{sec:results}. Because said results 
appeared only in an addendum \cite{dirr_ve_schatten} 
to another publication \cite{dirr_ve} on trace-class operators, we decided to sketch the proof 
in the appendix for the readers' convenience.

\begin{definition}\label{defi_1}
Let $p,q\in [1,\infty]$ be conjugate. Then for $C\in\mathcal B^p(\mathcal X)$
and $T\in\mathcal B^q(\mathcal X)$, the \emph{$C$-numerical range of} $T$ is defined to be
\begin{align*}
W_C(T):=\lbrace \operatorname{tr}(CU^\dagger TU)\,|\,U\in\mathcal U(\mathcal X) \rbrace\,.
\end{align*}
Following \eqref{eq:von_Neumann-2}, for $C\in\mathcal B^p(\mathcal X,\mathcal Y)$ and $T\in\mathcal B^q(\mathcal Y,\mathcal X)$
with $p,q\in [1,\infty]$ conjugate one may actually introduce the more general set (now invoking the unitary 
equivalence orbit $UTV$ of $T$ instead of the unitary similarity orbit $U^\dagger T U$)
\begin{align*}
S_C(T):= \{\operatorname{tr}(CUTV)\,|\,U\in\mathcal U(\mathcal X)\,, V\in\mathcal U(\mathcal Y)\}\,.
\end{align*}
\end{definition}
\noindent Note that all traces involved are well-defined due to Lemma \ref{lemma_10} and 
\ref{lemma_nu_hoelder}.

\begin{lemma}\label{lemma_proj_strong_conv}
Let $p\in[1,\infty]$, $C \in \mathcal B^p(\mathcal X)$ and $(S_n)_{n\in\mathbb N}$ be a sequence in 
$\mathcal B(\mathcal X)$ which converges strongly to $S \in\mathcal B(\mathcal X)$. Then one has
$S_n C \to SC$, $CS_n^\dagger \to CS^\dagger$, and $S_nCS_n^\dagger \to SCS^\dagger$ for $n \to \infty$
with respect to the norm $\|\cdot\|_p$.
\end{lemma}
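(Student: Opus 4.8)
The plan is to reduce all three convergences to the single statement $S_n C \to SC$ in $\|\cdot\|_p$, since the other two follow by symmetry and composition. For the adjoint statement $CS_n^\dagger \to CS^\dagger$, I would first observe that $(S_n^\dagger)_{n\in\mathbb N}$ need \emph{not} converge strongly to $S^\dagger$ in general; however, the sequence $(S_n)_{n\in\mathbb N}$ is strongly convergent, hence by the uniform boundedness principle it is bounded in operator norm, say $\|S_n\|\le M$ for all $n$, and likewise $\|S\|\le M$. This norm bound is what makes the adjoint case work: writing $CS_n^\dagger - CS^\dagger = (S_n C^\dagger - S C^\dagger)^\dagger$ and using that taking adjoints is an isometry on $\mathcal B^p$, the claim $CS_n^\dagger \to CS^\dagger$ is \emph{equivalent} to $S_n C^\dagger \to S C^\dagger$, which is an instance of the first assertion applied to $C^\dagger \in \mathcal B^p(\mathcal X)$. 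Finally, the two-sided statement follows by the triangle inequality and Lemma \ref{lemma_10}(a): $S_n C S_n^\dagger - S C S^\dagger = (S_n - S)CS_n^\dagger + SC(S_n^\dagger - S^\dagger)$, so $\|S_nCS_n^\dagger - SCS^\dagger\|_p \le \|(S_n-S)C\|_p\|S_n\| + \|S\|\,\|C(S_n^\dagger - S^\dagger)\|_p \le M\|(S_n-S)C\|_p + M\|CS_n^\dagger - CS^\dagger\|_p$, both terms of which tend to zero by the first two parts.

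So everything comes down to proving: if $S_n \to S$ strongly and $C \in \mathcal B^p(\mathcal X)$, then $\|(S_n - S)C\|_p \to 0$. I would handle this by a standard finite-rank approximation argument. Write $R_n := S_n - S$, so $R_n \to 0$ strongly and $\|R_n\| \le 2M$. Given $\varepsilon > 0$, use the Schmidt decomposition (Lemma \ref{thm_1}) and Remark \ref{rem_Schatten_p}: the series $C = \sum_{k} s_k(C)\langle f_k,\cdot\rangle g_k$ converges in $\|\cdot\|_p$ (for $p = \infty$ it converges in operator norm, which is $\|\cdot\|_\infty$), so choose $N$ with $\|C - C_N\|_p < \varepsilon$, where $C_N := \sum_{k=1}^N s_k(C)\langle f_k,\cdot\rangle g_k$ is finite-rank. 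Then by Lemma \ref{lemma_10}(a),
\begin{align*}
\|R_n C\|_p \le \|R_n(C - C_N)\|_p + \|R_n C_N\|_p \le \|R_n\|\,\|C - C_N\|_p + \|R_n C_N\|_p \le 2M\varepsilon + \|R_n C_N\|_p\,.
\end{align*}
It remains to show $\|R_n C_N\|_p \to 0$ as $n \to \infty$. Since $R_n C_N = \sum_{k=1}^N s_k(C)\langle f_k,\cdot\rangle (R_n g_k)$ has rank at most $N$, and strong convergence gives $R_n g_k \to 0$ in $\mathcal X$ for each fixed $k$, I would bound the Schatten-$p$-norm of this finite-rank operator crudely — for instance, $\|R_n C_N\|_p \le \sum_{k=1}^N s_k(C)\,\|\langle f_k,\cdot\rangle (R_n g_k)\|_p = \sum_{k=1}^N s_k(C)\,\|f_k\|\,\|R_n g_k\| = \sum_{k=1}^N s_k(C)\,\|R_n g_k\|$, using that the rank-one operator $\langle f,\cdot\rangle h$ has a single nonzero singular value $\|f\|\|h\|$, hence the same Schatten-$p$-norm for every $p$. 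The right-hand side is a finite sum that tends to $0$. Combining, $\limsup_n \|R_n C\|_p \le 2M\varepsilon$, and letting $\varepsilon \to 0$ finishes the proof.

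The main obstacle — and the reason strong convergence suffices despite adjoints not being strongly continuous — is precisely the adjoint assertion, which I resolve above by transposing the problem ($CS_n^\dagger \to CS^\dagger \iff S_nC^\dagger \to SC^\dagger$) rather than trying to work with $S_n^\dagger$ directly. A secondary technical point worth stating carefully is the uniform bound $\sup_n \|S_n\| < \infty$ coming from the uniform boundedness principle (or simply taken as part of the definition of strong convergence of a sequence); this bound is used in every step, so I would record it at the very beginning. The rest is the routine $\varepsilon/3$-style splitting into a finite-rank piece plus a small tail, with the key fact that for a fixed finite-rank operator written out in terms of finitely many vectors, strong convergence of $R_n$ on those vectors forces the Schatten-$p$-norm of $R_nC_N$ to zero uniformly in $p$.
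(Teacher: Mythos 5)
Your proof is correct and takes essentially the same route as the paper's: a finite-rank truncation of the Schmidt decomposition plus a uniformly bounded tail (via the uniform boundedness principle) gives $S_nC\to SC$ in $\|\cdot\|_p$, the isometry $\|A\|_p=\|A^\dagger\|_p$ transfers this to $CS_n^\dagger\to CS^\dagger$, and the triangle inequality with Lemma \ref{lemma_10}(a) yields the two-sided statement. No gaps.
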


\begin{proof}
The cases $p=1$ and $p=\infty$ are proven in \cite[Lemma 3.2]{dirr_ve}. As the proof for $p\in(1,\infty)$
is essentially the same, we sketch only the major differences. First, choose $K\in\mathbb N$ such that
\begin{align*}
\sum\nolimits_{k=K+1}^\infty s_k(C)^p<\frac{\varepsilon^p}{(3\kappa)^p}\,,
\end{align*}
where $\kappa > 0$ satisfies $\|S\|\leq\kappa$ and $\|S_n\|\leq\kappa$ for all $n\in\mathbb N$. 
The existence of the constant $\kappa > 0$ is guaranteed by the uniform boundedness principle.
Then decompose $C=\sum\nolimits_{k=1}^\infty s_k(C)\langle e_k,\cdot\rangle f_k$ into $C = C_1 + C_2$ with 
$C_1 := \sum\nolimits_{k=1}^K s_k(C)\langle e_k,\cdot\rangle f_k$ finite-rank. By Lemma \ref{lemma_10}
one has
\begin{align*}
\|SC - S_nC\|_p&\leq \|SC_1-S_n C_1\|_p+\Vert S\Vert\|C_2\|_p+\Vert S_n\Vert\|C_2\|_p
\\
&<\|SC_1-S_nC_1 \|_p+\frac{2\varepsilon}{3}\,.
\end{align*}
Thus, what remains is to choose $N\in\mathbb N$ such that $\|SC_1-S_nC_1 \|_p<\varepsilon/3$
for all $n\geq N$. To this end, consider the estimate
\begin{align*}
\|SC_1-S_n C_1\|_p\leq  \sum_{k=1}^K s_k(C)\|\langle e_k,\cdot\rangle (Sf_k-S_nf_k)\|_p=\sum_{k=1}^K s_k(C) \Vert Sf_k-S_nf_k \Vert\,.
\end{align*}
Then the strong convergence of $(S_n)_{n\in\mathbb N}$ yields $N \in \mathbb N$ such that
\begin{align*}
\Vert Sf_k - S_nf_k \Vert<\frac{\varepsilon}{3\sum\nolimits_{k=1}^Ks_k(C)}
\end{align*}
for $k = 1, \dots, K$ and all $n\geq N$. This shows $\|SC - S_nC\|_p\to 0$ as $n\to\infty$. All
other assertions are an immediate consequence of $\|A\|_p = \|A^\dagger\|_p$ for 
$A \in \mathcal B^p(\mathcal X)$ and 
\begin{align*}
\|SCS^\dagger - S_nCS_n^\dagger\|_p & \leq \Vert S\Vert\|CS^\dagger-CS_n^\dagger\|_p+\|SC-S_nC\|_p\Vert S_n\Vert \\
& \leq \kappa \big(\|CS^\dagger-CS_n^\dagger\|_p + \|SC-S_nC\|_p\big)\,.\qedhere
\end{align*}
\end{proof}

\begin{proposition}\label{prop_1}
Let $C\in\mathcal B^p(\mathcal X,\mathcal Y)$, $T\in\mathcal B^q(\mathcal Y,\mathcal X)$ with 
$p,q\in [1,\infty]$ conjugate and let $(C_n)_{n\in\mathbb N}$ and $(T_n)_{n\in\mathbb N}$ be sequences in $\mathcal B^p(\mathcal X,\mathcal Y)$ and $\mathcal B^q(\mathcal Y,\mathcal X)$, respectively, such that
$
\lim_{n\to\infty}\|C-C_n\|_p=\lim_{n\to\infty}\|T-T_n\|_q=0\,.
$
Then
\begin{equation}\label{eq:S_C_n}
\lim_{n\to\infty}\overline{S_{C_n}(T_n)}=\overline{S_C(T)}\,.
\end{equation}
If, additionally, $\mathcal X=\mathcal Y$ then
\begin{align}\label{eq:W_C_n}
\lim_{n\to\infty}\overline{W_{C_n}(T_n)}=\overline{W_C(T)}\,.
\end{align}
\end{proposition}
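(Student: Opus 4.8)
The plan is to prove the statement for $S_{C_n}(T_n)$ first and then deduce the $W_{C_n}(T_n)$-version either as the special case $\mathcal X=\mathcal Y$ with the second unitary fixed, or by an entirely analogous (in fact simpler) argument. Fix $p,q\in[1,\infty]$ conjugate. The key elementary estimate is this: for unitaries $U\in\mathcal U(\mathcal X)$, $V\in\mathcal U(\mathcal Y)$ we have, using $\operatorname{tr}(CUTV)-\operatorname{tr}(C_nUT_nV)=\operatorname{tr}((C-C_n)UTV)+\operatorname{tr}(C_nU(T-T_n)V)$ together with Lemma \ref{lemma_nu_hoelder} and Lemma \ref{lemma_10}(a),
\begin{align*}
|\operatorname{tr}(CUTV)-\operatorname{tr}(C_nUT_nV)|
&\le \|(C-C_n)UTV\|_1 + \|C_nU(T-T_n)V\|_1\\
&\le \|C-C_n\|_p\,\|T\|_q + \|C_n\|_p\,\|T-T_n\|_q\,.
\end{align*}
Since $\|C_n\|_p\to\|C\|_p$, the sequence $(\|C_n\|_p)_{n}$ is bounded, say by $M$, so the right-hand side is bounded by $\varepsilon_n:=\|C-C_n\|_p\|T\|_q+M\|T-T_n\|_q\to 0$, \emph{uniformly in $U$ and $V$}. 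This uniformity is the heart of the matter.

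From this uniform bound the Hausdorff convergence follows quickly. Given $\eta>0$, pick $N$ with $\varepsilon_n<\eta$ for $n\ge N$. For $n\ge N$: every point $z=\operatorname{tr}(CUTV)\in S_C(T)$ has the point $w=\operatorname{tr}(C_nUT_nV)\in S_{C_n}(T_n)$ within distance $\varepsilon_n<\eta$, and symmetrically every point of $S_{C_n}(T_n)$ is within $\eta$ of a point of $S_C(T)$; hence the same holds for the closures. By Lemma \ref{lemma_11} this gives $\Delta(\overline{S_{C_n}(T_n)},\overline{S_C(T)})\le\eta$ for all $n\ge N$, i.e.\ the claimed convergence \eqref{eq:S_C_n}. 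The one point that needs a word of care is that $\overline{S_C(T)}$ and $\overline{S_{C_n}(T_n)}$ are nonempty compact subsets of $\mathbb C$, so that $\Delta$ is actually defined on them: nonemptiness and boundedness are clear (the sets lie in the disk of radius $\|C\|_p\|T\|_q$, resp.\ $\|C_n\|_p\|T_n\|_q$, by Lemma \ref{lemma_nu_hoelder}), and closedness holds by construction. For \eqref{eq:W_C_n} one repeats the argument verbatim with $V$ replaced by $U^\dagger$ and a single unitary $U\in\mathcal U(\mathcal X)$, the estimate being identical.

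I do not expect a serious obstacle: the only subtlety is recognizing that Lemma \ref{lemma_nu_hoelder} and Lemma \ref{lemma_10}(a) deliver a bound \emph{independent of the unitaries}, which is exactly what makes the two one-sided Hausdorff inequalities fall out simultaneously. Everything else — boundedness of $(\|C_n\|_p)$, nonemptiness/compactness of the closures, the translation through Lemma \ref{lemma_11} — is routine. Strictly speaking Lemma \ref{lemma_10}(a) is stated for operators on a single space $\mathcal X$, so to be fully rigorous one either notes its obvious analogue for operators between two spaces, or absorbs the unitaries directly via $\|(C-C_n)U\|_p=\|C-C_n\|_p$ (unitary invariance of the singular numbers) and the analogous identity for right multiplication by $V$; this is the sort of bookkeeping detail best handled in a single sentence rather than belabored.
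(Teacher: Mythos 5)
Your proof is correct and follows essentially the same route as the paper's: the identical two-term decomposition of $\operatorname{tr}(CUTV)-\operatorname{tr}(C_nUT_nV)$, the same H\"older-type bound that is uniform in the unitaries, and the same translation into the two one-sided Hausdorff estimates via Lemma \ref{lemma_11}. The only cosmetic difference is that you pass to closures at the end via a limiting argument, whereas the paper approximates a point of the closure by an honest trace value up to $\varepsilon/2$ first; both are fine.
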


\begin{proof}
W.l.o.g.~let $C_n,T_n\neq 0$ for some $n\in\mathbb N$--else all the involved sets would be trivial--so we may introduce the positive but (as seen via the reverse triangle inequality) finite numbers
$$
\kappa := \sup\{\|C\|_p,\|C_1\|_p,\|C_2\|_p,\ldots\}  
\quad\text{ and }\quad \tau :=\sup\{\|T\|_q,\|T_1\|_q,\|T_2\|_q,\ldots\}\,.
$$
Let $\varepsilon>0$. By assumption there exists $N\in\mathbb N$ such that
$$
\|C-C_n\|_p<\frac{\varepsilon}{4\tau}  \qquad\text{ as well as }\qquad \|T-T_n\|_q<\frac{\varepsilon}{4\kappa}
$$
for all $n\geq N$. We shall first tackle \eqref{eq:S_C_n}, as \eqref{eq:W_C_n} can be shown in complete analogy. 
The goal will be to satisfy the assumptions of Lemma \ref{lemma_11} in order to show
$\Delta(\overline{S_C(T)},\overline{S_{C_n}(T_n)})<\varepsilon$ for all $n\geq N$. \medskip

\noindent Let $w\in\overline{S_C(T)}$ so one finds $U\in\mathcal U(\mathcal X)$, $V\in\mathcal U(\mathcal Y)$
such that $w':=\operatorname{tr}(CUTV)$ satisfies $|w-w'|<\frac{\varepsilon}{2}$. Thus
for $w_n:=\operatorname{tr}(C_nUT_nV)$ by Lemma \ref{lemma_10} and \ref{lemma_nu_hoelder}
\begin{align*}
|w-w_n|&\leq|w-w'|-|w'-w_n|\\
&<\frac{\varepsilon}{2}+|\operatorname{tr}( (C-C_n)UTV )|+|\operatorname{tr}(VC_nU(T-T_n))|\\
&\leq\frac{\varepsilon}{2}+\|C-C_n\|_p\|U\|\|T\|_q\|V\|+\|V\|\|C_n\|_p\|U\|\|T-T_n\|q\\
&\leq \frac{\varepsilon}{2}+\|C-C_n\|_p\,\tau+\kappa\,\|T-T_n\|_q<\varepsilon
\end{align*}
for all $n \geq N$. 

Similarly, let $n \geq N$. Then for $v_n\in\overline{S_{C_n}(T_n)}$ 
one finds $U_n\in\mathcal U(\mathcal X)$, $V_n\in\mathcal U(\mathcal Y)$ such that 
$v_n':=\operatorname{tr}(C_nU_nT_nV_n)$ satisfies $|v_n-v_n'|<\frac{\varepsilon}{2}$. Thus for 
$\tilde v_n:=\operatorname{tr}(CU_nTV_n)$ we obtain
\begin{align*}
|v_n-\tilde v_n|&\leq|v_n-v_n'|-|v_n'-\tilde v_n|\\
&<\frac{\varepsilon}{2}+|\operatorname{tr}( (C_n-C)U_nT_nV_n )|+|\operatorname{tr}(V_nCU_n(T_n-T))|\\
&\leq \frac{\varepsilon}{2}+\|C-C_n\|_p\,\tau+\kappa\,\|T-T_n\|_q<\varepsilon\,.\qedhere
\end{align*}
\end{proof}

The preceding proposition together with Lemma \ref{lemma_proj_strong_conv} immediately 
entails the next result.

\begin{corollary}\label{lemma_2}
Let $C\in\mathcal B^p(\mathcal H)$, $T\in\mathcal B^q(\mathcal H)$ with $p,q\in [1,\infty]$ conjugate. 
Then 
$
\lim_{k\to\infty}\overline{W_C(\Pi_kT\Pi_k)}=\overline{W_C(T)}\,,
$
where $\Pi_k$ is the orthogonal projection onto the span of the first $k$ elements of 
an arbitrarily chosen orthonormal basis $(e_n)_{n\in\mathbb N}$ of $\mathcal H$.
\end{corollary}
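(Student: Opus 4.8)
The plan is to obtain this as an immediate specialization of Proposition \ref{prop_1}, applied with $\mathcal X=\mathcal Y=\mathcal H$, the constant sequence $C_n:=C$, and $T_n:=\Pi_nT\Pi_n$. In that setting Proposition \ref{prop_1} delivers $\lim_{k\to\infty}\overline{W_C(\Pi_kT\Pi_k)}=\overline{W_C(T)}$ as soon as its hypotheses are met, i.e.\ as soon as $\|C-C_n\|_p\to 0$ (trivial, since the sequence is constant) and $\|T-\Pi_nT\Pi_n\|_q\to 0$. So the entire task reduces to establishing this latter norm convergence.

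First I would record the elementary fact that the partial-sum projections $\Pi_k$ attached to the orthonormal basis $(e_n)_{n\in\mathbb N}$ satisfy $\Pi_k x=\sum_{j=1}^k\langle e_j,x\rangle e_j\to x$ for every $x\in\mathcal H$; hence $(\Pi_k)_{k\in\mathbb N}$ converges strongly to the identity $I\in\mathcal B(\mathcal H)$, and of course $\|\Pi_k\|\le 1$ for all $k$. Since each $\Pi_k$ is self-adjoint, we also have $\Pi_k^\dagger=\Pi_k\to I=I^\dagger$ strongly. Moreover $\Pi_kT\Pi_k\in\mathcal B^q(\mathcal H)$ by Lemma \ref{lemma_10}(a), so the $T_n$ really lie in the right Schatten class.

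Next I would invoke Lemma \ref{lemma_proj_strong_conv} with the exponent $p$ there taken to be $q$, the operator $C$ there taken to be $T\in\mathcal B^q(\mathcal H)$, and the strongly convergent sequence $S_k:=\Pi_k\to S:=I$. The lemma then yields $\Pi_kT\Pi_k=S_kTS_k^\dagger\to STS^\dagger=T$ in $\|\cdot\|_q$, which is exactly the missing hypothesis.

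Feeding $C_n:=C$ and $T_n:=\Pi_nT\Pi_n$ into Proposition \ref{prop_1} now gives \eqref{eq:W_C_n} in the present situation, i.e.\ $\lim_{k\to\infty}\overline{W_C(\Pi_kT\Pi_k)}=\overline{W_C(T)}$, as claimed. I expect no genuine obstacle here: the only point requiring care is the standard verification that partial-sum projections converge strongly (and are uniformly bounded), and the substantive work has already been packaged into Proposition \ref{prop_1} and Lemma \ref{lemma_proj_strong_conv}.
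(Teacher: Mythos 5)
Your proposal is correct and follows exactly the paper's intended argument: the paper derives this corollary as an immediate consequence of Proposition \ref{prop_1} combined with Lemma \ref{lemma_proj_strong_conv} applied to the strongly convergent sequence $\Pi_k\to\operatorname{id}_{\mathcal H}$. The details you supply (constant sequence $C_n=C$, $T_n=\Pi_kT\Pi_k=S_kTS_k^\dagger$ with $S_k=\Pi_k$, and the uniform bound $\|\Pi_k\|\le 1$) are precisely the routine verifications the paper leaves implicit.
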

\noindent Here we used the well-known fact that the orthogonal projections $\Pi_k$
strongly converge to the identity $\operatorname{id}_{\mathcal H}$ for $k\to\infty$, cf., e.g., 
\cite[Lemma 3.2]{dirr_ve}.
\begin{definition}[$C$-spectrum]\label{defi_3}
Let $p,q\in[1,\infty]$ be conjugate. Then, for $C\in\mathcal B^p(\mathcal H)$ with modified
eigenvalue sequence $(\lambda_n(C))_{n\in\mathbb N}$ and $T\in\mathcal B^q(\mathcal H)$ with modified
eigenvalue sequence $(\lambda_n(T))_{n\in\mathbb N}$, the $C$-\emph{spectrum of} $T$ is defined via
\begin{align*}
P_C(T):=\Big\lbrace \sum\nolimits_{n=1}^\infty \lambda_n(C)\lambda_{\sigma(n)}(T) \,\Big|\, 
\sigma:\mathbb N \to\mathbb N \text{ is any permutation}\Big\rbrace.
\end{align*}
\end{definition}

\noindent 
H\"older's inequality and the standard estimate
$\sum\nolimits_{n=1}^\infty  |\lambda_n(C)|^p \leq \sum\nolimits_{n=1}^\infty s_n(C)^p$,
cf.~\cite[Prop.~16.31]{MeiseVogt97en}, yield
$$
\sum\nolimits_{n=1}^\infty |\lambda_n(C)\lambda_{\sigma(n)}(T)|
\leq \Big(\sum\nolimits_{n=1}^\infty  s_n(C)^p \Big)^{1/p}\Big(\sum\nolimits_{n=1}^\infty s_n(T)^q\Big)^{1/q}
=\|C\|_p\|T\|_q\,,
$$
showing that the elements of $P_C(T)$ are well-defined and bounded by $\|C\|_p\|T\|_q$.

Now, if the operators $C$ and $T$ are particularly  ``nice'', one can connect the 
$C$-numerical range and the $C$-spectrum of $T$ as follows:

\begin{theorem}[\cite{dirr_ve_schatten}
]
\label{theorem_3}
Let $C\in\mathcal B^p(\mathcal H)$ and $T\in\mathcal B^q(\mathcal H)$ with $p,q\in [1,\infty]$ 
conjugate. Then the following statements hold.
\begin{itemize}
\item[(a)] $\overline{W_C(T)}$ is star-shaped with respect to the origin.\vspace{4pt}
\item[(b)] If either $C$ or $T$ is normal with collinear eigenvalues, then $\overline{W_C(T)}$ is convex.\vspace{4pt}
\item[(c)] If $C$ and $T$ both are normal, then $P_C(T)\subseteq W_C(T)\subseteq\operatorname{conv}(\overline{P_C(T)})$. If, in addition, the eigenvalues of $C$ or $T$ are collinear then $\overline{W_C(T)}=\operatorname{conv}(\overline{P_C(T)})$.
\end{itemize}
\end{theorem}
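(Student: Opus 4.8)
The plan is to \emph{reduce to the finite-dimensional case} and then push the conclusions through the approximation machinery already at hand. Fix an orthonormal basis of $\mathcal H$, let $\Pi_k$ be the orthogonal projection onto the span of its first $k$ elements, and put $C_k:=\Pi_kC\Pi_k$, $T_k:=\Pi_kT\Pi_k$ (for (b) and (c) I would truncate in the respective eigenbases instead, so that normality is preserved). Since $\Pi_k\to\operatorname{id}_{\mathcal H}$ strongly, Lemma~\ref{lemma_proj_strong_conv} gives $C_k\to C$ in $\|\cdot\|_p$ and $T_k\to T$ in $\|\cdot\|_q$, so $\overline{W_{C_k}(T_k)}\to\overline{W_C(T)}$ in the Hausdorff metric by Proposition~\ref{prop_1}; all sets in sight are non-empty and compact. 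Because $C_k,T_k$ are supported on the $k$-dimensional subspace $\mathcal H_k=\operatorname{ran}\Pi_k$, cyclicity of the trace reduces $\operatorname{tr}(C_kU^\dagger T_kU)$ to $\operatorname{tr}_{\mathcal H_k}(\tilde C_kN^\dagger\tilde T_kN)$ with $N:=\Pi_kU\Pi_k$, and by the unitary dilation of contractions $N$ runs through \emph{all} contractions of $\mathcal H_k$ as $U$ ranges over $\mathcal U(\mathcal H)$; equivalently $W_{C_k}(T_k)$ is the ordinary matrix $C_k$-numerical range of $T_k$ with both operators padded by zeros on a $2k$-dimensional space. Part (a) is now essentially immediate: $N=0$ gives $0\in W_{C_k}(T_k)$, and if $z=\operatorname{tr}_{\mathcal H_k}(\tilde C_kN^\dagger\tilde T_kN)$ then the contraction $\sqrt t\,N$ realizes $tz$ for every $t\in[0,1]$, so $W_{C_k}(T_k)$ is star-shaped with respect to the origin; since $0$ lies in every $\overline{W_{C_k}(T_k)}$, star-shapedness with respect to it survives the Hausdorff limit, giving (a). (The appearance of the origin as star centre is exactly the imprint of infinite-dimensionality: compression replaces unitaries by contractions, which are scaling-closed.)

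For (b), first note $W_C(T)=W_T(C)$ (replace $U$ by $U^\dagger$), so we may assume $C$ normal with collinear eigenvalues. As $C$ is compact, its eigenvalue sequence accumulates at $0$ and hence lies on a line through the origin; writing the eigenvalues as $e^{i\theta}r_n$ with $r_n\in\mathbb R$ gives $C=e^{i\theta}C'$ with $C'$ self-adjoint and compact and $W_C(T)=e^{i\theta}W_{C'}(T)$, so we may assume $C$ self-adjoint. Then the truncation $\tilde C_k$ together with its zero padding is Hermitian, and Westwick's theorem on the convexity of the $C$-numerical range of a Hermitian matrix shows each $\overline{W_{C_k}(T_k)}$ is convex; convexity is preserved under Hausdorff limits, which yields (b).

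For (c), diagonalize $C$ and $T$ via Lemma~\ref{lemma_berb_4_6}. The inclusion $P_C(T)\subseteq W_C(T)$ is direct: for a permutation $\sigma$ the unitary sending the $n$-th eigenvector of $C$ to the $\sigma(n)$-th eigenvector of $T$ makes $U^\dagger TU$ diagonal in the eigenbasis of $C$, so its trace against $C$ equals $\sum_n\lambda_n(C)\lambda_{\sigma(n)}(T)$. For the upper inclusion, at the truncated level $(|N_{ij}|^2)$ is a doubly substochastic matrix, so Birkhoff's theorem (doubly substochastic matrices are convex combinations of partial permutation matrices) gives $W_{C_k}(T_k)\subseteq\operatorname{conv}(P_{C_k}(T_k))$ — here one uses that the modified eigenvalue sequences of the finite-rank $C_k,T_k$ contain infinitely many zeros, so $P_{C_k}(T_k)$ already absorbs the partial matchings; conversely $P_{C_k}(T_k)\subseteq W_{C_k}(T_k)$ via partial-permutation contractions, so $\overline{P_{C_k}(T_k)}\subseteq\overline{W_{C_k}(T_k)}\subseteq\operatorname{conv}(\overline{P_{C_k}(T_k)})$. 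Using $\|\lambda(C_k)\|_p\le\|C\|_p$ and $\|\lambda(T_k)\|_q\le\|T\|_q$ uniformly, Hölder's inequality supplies a permutation-uniform tail estimate showing $\overline{P_{C_k}(T_k)}\to\overline{P_C(T)}$, hence $\operatorname{conv}(\overline{P_{C_k}(T_k)})\to\operatorname{conv}(\overline{P_C(T)})$ (the convex hull is $1$-Lipschitz in the Hausdorff metric on compact planar sets). Passing the two-sided inclusion to the limit yields $W_C(T)\subseteq\overline{W_C(T)}\subseteq\operatorname{conv}(\overline{P_C(T)})$. If in addition $C$ or $T$ has collinear eigenvalues, part (b) makes $\overline{W_C(T)}$ convex; since it contains $P_C(T)$, hence $\overline{P_C(T)}$, hence $\operatorname{conv}(\overline{P_C(T)})$, the asserted equality follows.

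I expect the main obstacle to lie in the upper inclusion of (c): identifying $W_{C_k}(T_k)$ (a $C$-numerical range over the \emph{infinite} unitary group) with the image of the doubly substochastic matrices, reconciling this with the definition of $P_{C_k}(T_k)$ through modified eigenvalue sequences, and then establishing the Hausdorff convergence $\overline{P_{C_k}(T_k)}\to\overline{P_C(T)}$. The latter two points are a matter of careful bookkeeping with the placement of zeros in the modified eigenvalue sequences (which leaves the $C$-spectrum unchanged) and of a uniform-in-$\sigma$ application of Hölder's inequality. Part (a) is soft once the contraction picture is in place, and part (b) hinges only on correctly invoking Westwick's matrix convexity theorem after the reduction to the self-adjoint case.
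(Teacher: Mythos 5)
Your argument is correct, and while it shares the paper's overall strategy---reduce to finite rank, invoke the classical finite-dimensional theorems, and pass to the limit using that Hausdorff convergence preserves star-shapedness, convexity and inclusions---it differs from the paper's proof in two substantive ways. First, the scaffolding: the paper truncates via the cut-out matrices $[C]^e_{2n}$, $[T]^g_{2n}$ and relies on Proposition~\ref{lemma_6} (whose proof is itself only cited from \cite{dirr_ve}) to obtain $W_{[C]^e_{2n}}([T]^g_{2n})\to\overline{W_C(T)}$ and $P_{[C]^e_n}([T]^g_n)\to\overline{P_C(T)}$; you instead truncate by compression, get convergence of the $W$'s from Lemma~\ref{lemma_proj_strong_conv} and Proposition~\ref{prop_1}, and identify $W_{C_k}(T_k)$ with a matrix $C$-numerical range via Halmos dilation---which is exactly what the paper's doubling $n\mapsto 2n$ is secretly doing, but your version is more self-contained within this paper. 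The price is that you must supply $\overline{P_{C_k}(T_k)}\to\overline{P_C(T)}$ yourself; the uniform-in-$\sigma$ H\"older tail estimate you sketch does close this, since a permutation can only mismatch the truncated and untruncated sums on pairs where one factor comes from a tail, and each such contribution is bounded by $\|C\|_p\big(\sum_{m>k}|\lambda_m(T)|^q\big)^{1/q}$ or its mirror image. Second, and more interestingly, your proof of (a) is genuinely simpler than the paper's: scaling the contraction $N\mapsto\sqrt{t}\,N$ exhibits $0$ as a star centre of $W_{C_k}(T_k)$ directly at every finite stage, whereas the paper invokes the Cheung--Tsing star centre $\operatorname{tr}([C]_{2n})\operatorname{tr}([T]_{2n})/(2n)$ and then needs the separate Ces\`aro-type estimate of Lemma~\ref{lemma_0_conv} to push that centre to the origin; your route avoids both. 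For (b) and (c) the finite-dimensional inputs coincide (Poon/Westwick for convexity; your substochastic--Birkhoff argument is in essence the proof of Sunder's Corollary~2.4, which the paper cites as a black box). The only bookkeeping worth making explicit is that in (c) the two eigenbases differ, so the compression of $U$ is a contraction between two different $k$-dimensional subspaces, and that the partial-matching sums produced by sub-permutation matrices lie in $P_{C_k}(T_k)$ only because the modified eigenvalue sequences of the finite-rank truncations contain infinitely many zeros---both points you already flag.
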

\noindent As stated in the beginning, a sketch of the proof can be found in Appendix \ref{app_A}.
%
%
%

\section{MAIN RESULTS}\label{sec:results}

Considering the inequalities \eqref{eq:von_Neumann} and \eqref{eq:von_Neumann-hermitian} from the introduction, it arguably is 
easier to generalize the former, i.e.~to generalize von Neumann's ``original'' trace inequality to Schatten-class 
operators. To start with we first investigate
the finite-rank case.

\begin{lemma}\label{lemma_S_C_finite_rank}
Let $C\in\mathcal F(\mathcal X,\mathcal Y)$, $T\in\mathcal F(\mathcal Y,\mathcal X)$ and $k:=\max\{\operatorname{rk}(C),\operatorname{rk}(T)\}\in\mathbb N_0$. Then $S_C(T)=K_r(0)$ where $r:=\sum_{j=1}^ks_j(C)s_j(T)$. 
\end{lemma}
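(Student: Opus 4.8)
The plan is to reduce the statement to von Neumann's classical inequality \eqref{eq:von_Neumann} by exploiting the finite-rank structure. First I would dispose of the degenerate case $k=0$ (i.e.\ $C=0$ or $T=0$), where both sides are trivially $\{0\}=K_0(0)$. So assume $k\geq 1$. The key observation is that although $\mathcal X$ and $\mathcal Y$ are infinite-dimensional, the operator $CUTV$ always has its range inside the (finite-dimensional) range of $C$, and the trace $\operatorname{tr}(CUTV)$ only ``sees'' a finite-dimensional compression. More precisely, let $\mathcal X_0\subseteq\mathcal X$ be a finite-dimensional subspace containing $\operatorname{ran}(T^\dagger)$ and $\operatorname{ran}(C)$'s preimage relevant pieces, and similarly $\mathcal Y_0\subseteq\mathcal Y$ a finite-dimensional subspace containing $\operatorname{ran}(C)$ and $\operatorname{ran}(T)$; one can take $\dim\mathcal X_0=\dim\mathcal Y_0=:m$ for some finite $m\geq k$ by padding with extra orthonormal vectors. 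Writing $C,T$ as matrices between these subspaces (with the Schmidt decompositions from Lemma~\ref{thm_1} truncated to finitely many terms), the singular values of the compressed matrices coincide with $s_1(C),\dots,s_k(C),0,\dots$ and $s_1(T),\dots,s_k(T),0,\dots$.

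Next I would handle the two inclusions separately. For ``$\subseteq$'': given any $U\in\mathcal U(\mathcal X)$, $V\in\mathcal U(\mathcal Y)$, the number $\operatorname{tr}(CUTV)$ equals $\operatorname{tr}(\tilde C\,\tilde U\,\tilde T\,\tilde V)$ where $\tilde C,\tilde T$ are the $m\times m$ matrix representations and $\tilde U,\tilde V$ are the compressions of $U,V$ to $\mathcal X_0,\mathcal Y_0$ — but these compressions need not be unitary, only contractions. The clean fix is to invoke the circularity/disk structure: since $S_C(T)$ is circular (replace $U$ by $e^{i\varphi}U$), it suffices to bound $|\operatorname{tr}(CUTV)|$, and here Lemma~\ref{lemma_nu_hoelder} together with the standard fact that $\max_{U,V\text{ unitary}}|\operatorname{tr}(AUBV)|=\sum_j s_j(A)s_j(B)$ (von Neumann, applied to the finite matrices) gives $|\operatorname{tr}(CUTV)|\leq r$, using that a compression of a unitary is a contraction and that von Neumann's bound extends to contractions via the singular value inequality $|\operatorname{tr}(AXBY)|\leq\sum_j s_j(A)s_j(B)$ for contractions $X,Y$. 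Hence $S_C(T)\subseteq K_r(0)$, and since $S_C(T)$ is circular and $0\in S_C(T)$ (e.g.\ pick $U$ so that $\operatorname{ran}(UTV)\perp$ the relevant subspace, possible in infinite dimensions — in fact even just taking the classical extremal $U,V$ in the finite block and rotating), one gets that the modulus attains a maximum.

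For ``$\supseteq$'': by von Neumann's theorem there exist $m\times m$ unitaries $U_0,V_0$ with $\operatorname{tr}(\tilde C U_0\tilde T V_0)=r$. Extend $U_0,V_0$ to unitaries $U\in\mathcal U(\mathcal X)$, $V\in\mathcal U(\mathcal Y)$ by letting them act as the identity on $\mathcal X_0^\perp$, $\mathcal Y_0^\perp$; since $C$ kills $\mathcal X_0^\perp$ on input and $T$ maps into $\mathcal Y_0$, one checks $\operatorname{tr}(CUTV)=\operatorname{tr}(\tilde CU_0\tilde TV_0)=r$. Thus $r\in S_C(T)$, and circularity upgrades this to $\{z:|z|=r\}\subseteq S_C(T)$; combined with $0\in S_C(T)$ and the fact that $S_C(T)$ is star-shaped about the origin (Theorem~\ref{theorem_3}(a), applied after identifying $S_C(T)$ with a $W$-type set, or more simply directly via scaling the finite unitaries), one concludes $K_r(0)\subseteq S_C(T)$.

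I expect the main obstacle to be the bookkeeping in choosing the common finite-dimensional subspaces $\mathcal X_0,\mathcal Y_0$ and verifying that all traces genuinely reduce to the finite matrix trace — in particular making sure that compressing the \emph{unitaries} (which only yields contractions) is harmless, which is why leaning on the contraction-version of von Neumann's inequality for the ``$\subseteq$'' direction, rather than trying to force unitarity, is the right move.
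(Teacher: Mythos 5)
Your ``$\subseteq$'' direction is sound but takes a genuinely different route from the paper's. You compress $U$ and $V$ to a common finite-dimensional block, accept that these compressions are merely contractions, and invoke the contraction form of von Neumann's inequality $|\operatorname{tr}(AXBY)|\le\sum_j s_j(A)s_j(B)$. That is correct (e.g.\ via cyclicity, $s_j(YAX)\le\Vert Y\Vert\, s_j(A)\,\Vert X\Vert$ and the trace--singular-value inequality, or via the fact that the unit ball of $\mathbb C^{N\times N}$ is the closed convex hull of the unitaries), but it uses slightly more than the quoted statement \eqref{eq:von_Neumann}. The paper avoids contractions altogether: it rewrites $\operatorname{tr}(CUTV)$ as $\operatorname{tr}(C'T')$, where $T'$ is assembled from the coefficient vectors $(\langle e_l,Uh_j\rangle)_l$ and $(\langle f_l,V^\dagger g_j\rangle)_l$, which remain \emph{orthonormal} systems precisely because $U,V$ are unitary; hence $s_j(T')=s_j(T)$ and the unitary version of \eqref{eq:von_Neumann} applies directly. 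Both approaches work; yours needs the extra (standard) contraction lemma, the paper's needs the orthonormality bookkeeping. (A small slip: $\operatorname{ran}(T)\subseteq\mathcal X$ and $\operatorname{ran}(T^\dagger)\subseteq\mathcal Y$, so it is $\operatorname{ran}(T)$ that belongs in $\mathcal X_0$ and $\operatorname{ran}(T^\dagger)$ in $\mathcal Y_0$.)

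The genuine gap is in ``$\supseteq$'', at the step from ``$S_C(T)$ contains $0$ and the circle of radius $r$'' to ``$S_C(T)\supseteq K_r(0)$''. Neither justification you offer works. Theorem~\ref{theorem_3}(a) concerns $\overline{W_C(T)}$ -- the unitary \emph{similarity} orbit, and only its closure -- not $S_C(T)$, so it cannot deliver the exact, non-closed equality the lemma asserts; and ``scaling the finite unitaries'' leaves the unitary group, so the scaled values need not lie in $S_C(T)$ at all. Two repairs are available. First, invoke the full disk form \eqref{eq:von_Neumann-2} for the $m\times m$ matrices: every $z\in K_r(0)$ equals $\operatorname{tr}(\tilde C U_0\tilde T V_0)$ for finite unitaries $U_0,V_0$, and your extension-by-the-identity argument then realizes $z$ in $S_C(T)$. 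Second (the paper's route), exhibit unitaries realizing $r$ and $0$, observe that $\mathcal U(\mathcal X)\times\mathcal U(\mathcal Y)$ is path-connected and $(U,V)\mapsto\operatorname{tr}(CUTV)$ is continuous, so the image contains a path from $0$ to $r$; the intermediate value theorem gives a point of every modulus $s\in[0,r]$, and circularity fills in the disk. Either fix is short, but as written the interior of the disk is not accounted for.
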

\begin{proof}
Defining $k$ as above, Lemma \ref{thm_1} yields orthonormal systems $(e_j)_{j=1}^k$, $(h_j)_{j=1}^k$ in $\mathcal X$ and $(f_j)_{j=1}^k$, $(g_j)_{j=1}^k$ in $\mathcal Y$ such that
$$
C=\sum\nolimits_{j=1}^k s_j(C)\langle e_j,\cdot\rangle f_j\quad\text{ and }\quad T=\sum\nolimits_{j=1}^k s_j(T)\langle g_j,\cdot\rangle h_j\,.
$$
Note that forcing both sums to have same summation range means that, potentially,
some of the singular values have to be complemented by zeros, which is not of further
importance.\medskip

``$\subseteq$'': Let any $U\in\mathcal U(\mathcal X)$, $V\in\mathcal U(\mathcal Y)$ be given. Then
\begin{align*}
\operatorname{tr}(CUTV)&=\operatorname{tr}\Big( \sum\nolimits_{i,j=1}^{k}s_i(T)s_j(C)\langle e_j,Uh_i\rangle\langle V^\dagger g_i,\cdot\rangle f_j\Big)\\
&=\hphantom{\operatorname{tr}\Big(} \sum\nolimits_{i,j=1}^{k}s_i(T)s_j(C)\langle e_j,Uh_i\rangle\langle g_i,V f_j\rangle
\end{align*}
by direct computation. Now consider the subspaces
\begin{align*}
Z_1&:=\operatorname{span}\{e_1,\ldots,e_k,Uh_1,\ldots,Uh_k\}\subset \mathcal X\\
Z_2&:=\operatorname{span}\{f_1,\ldots,f_k,V^\dagger g_1,\ldots,V^\dagger g_k\}\subset\mathcal Y
\end{align*}
so there exist orthonormal bases of the form $$e_1,\ldots,e_k, e_{k+1},\ldots, e_N\quad\text{ and }\quad f_1,\ldots,f_k, f_{k+1},\ldots, f_{N'}$$ of $Z_1$ and $Z_2$ for some $N,N'\geq k$, respectively. W.l.o.g.\footnote{This can be done for example by sufficiently expanding the ``smaller'' orthonormal systems in $\mathcal X$ or $\mathcal Y$ and possibly passing to new subspaces $Z_1'\supset Z_1$ or $Z_2'\supset Z_2$ which is always doable because we are in infinite dimensions. The particular choice of $Z_1'$ and $Z_2'$ is irrelevant because we only need the 
orthonormal systems which represent $C$ and $T$ to be contained within these finite-dimensional subspaces.} we can assume $N=N'$ and define
$$
a_j:=(\langle e_l,U h_j\rangle)_{l=1}^N\in\mathbb C^N\quad\text{ and }\quad b_j:=(\langle f_l,V^\dagger g_j\rangle)_{l=1}^N\in\mathbb C^N
$$
for $j=1,\ldots,k$. This yields $N\times N$ matrices
\begin{align*}
C'=\operatorname{diag} (s_1(C),\ldots, s_k(C),0,\ldots,0)\quad\text{ and }\quad  T'=\sum\nolimits_{j=1}^k s_j(T) \langle b_j,\cdot\rangle a_j
\end{align*}
which satisfy $\operatorname{tr}(C'T')=\sum\nolimits_{i,j=1}^{k}s_i(T)s_j(C)\langle e_j,Uh_i\rangle\langle g_i,V f_j\rangle$. By construction, one readily verifies that $(a_j)_{j=1}^k,(b_j)_{j=1}^k$ are orthonormal systems in $\mathbb C^N$ so $s_j(T')=s_j(T)$ for all $j=1,\ldots,N$. Thus von Neumann's original result \eqref{eq:von_Neumann} yields
$$
|\operatorname{tr}(CUTV)|=|\operatorname{tr}(C'T')|\leq\sum\nolimits_{j=1}^N s_j(C')s_j(T')=\sum\nolimits_{j=1}^k s_j(C) s_j(T)\,.
$$

``$\supseteq$'': We first consider unitary operators $U_T\in\mathcal B(\mathcal X)$, $V_T\in\mathcal B(\mathcal Y)$ 
such that $U_Th_j=e_j$ and $V_Tf_j=g_j$ for all $j=1,\ldots,k$. This is always possible by completing the 
respective orthonormal systems $(e_j)_{j=1}^k$, $\ldots$ to orthonormal bases $(e_j)_{j\in J}$, $\ldots$ 
which can then be transformed into each other via some unitary. This allows us to construct $\tilde T:=U_TTV_T=\sum\nolimits_{j=1}^k s_j(T)\langle f_j,\cdot\rangle e_j$ such that
$$
\operatorname{tr}(C\tilde U\tilde T \tilde V)=\sum\nolimits_{i,j=1}^N s_j(C)s_i(T)\langle e_j,\tilde Ue_i\rangle\langle f_i,\tilde Vf_j\rangle
$$
for any $\tilde U\in\mathcal U(\mathcal X)$, $\tilde V\in\mathcal U(\mathcal Y)$. Of course $S_C(T)=S_C(\tilde T)$ and the latter satisfies
%
%
%
\begin{itemize}
\item $r\in S_C(\tilde T)$: choose $\tilde U=\operatorname{id}_{\mathcal X}$, $\tilde V=\operatorname{id}_{\mathcal Y}$ and also\vspace{4pt}
\item $0\in S_C(\tilde T)$: choose $\tilde U$, $\tilde V$ as cyclic shift on the first $k$ basis elements, i.e.
$$
\tilde U:\mathcal X\to\mathcal X\,,\qquad  e_j\mapsto \begin{cases} e_{j+1}&j=1,\ldots,k-1\\ e_{1}&j=k\\ e_j&j\in J\setminus\{1,\ldots,k\} \end{cases}
$$
and similarly $\tilde V$ (on $\{f_1,\ldots,f_k\}$).\vspace{2pt}
\end{itemize}
Now because the unitary group $\mathcal U(\mathcal Y)$ on any Hilbert space $\mathcal Y$ is path-connected
\footnote{The standard argument for this goes as follows, cf.~\cite[Proof of Thm.~12.37]{Rudin91}: For every 
$U\in\mathcal U(\mathcal Y)$ there exists self-adjoint $Q\in\mathcal B(\mathcal Y)$ such that $U=\exp(iQ)$. 
Then $t\mapsto T(t):=\exp(itQ)$ is a continuous mapping of $[0,1]$ into $\mathcal U(\mathcal Y)$ with 
$T(0)=\operatorname{id}_{\mathcal Y}$ and $T(1)=U $. Thus every unitary operator is path-connected to the 
identity which implies path-connectedness of $\mathcal U(\mathcal Y$).} and because the mapping 
$f:\mathcal B(\mathcal X)\times\mathcal B(\mathcal Y)\to \mathbb C$, $(U,V)\mapsto \operatorname{tr}(CU\tilde TV)$ 
is continuous, 
the image $f(\mathcal U(\mathcal X)\times\mathcal U(\mathcal Y))$ has to 
be path-connected as well. In particular, $0$ and $r$ are path-connected within $S_C(T)$, i.e.~for every 
$s\in[0,r]$ there exists $\phi(s)\in[0,2\pi)$ such that $se^{i\phi(s)}\in S_C(\tilde T)=S_C(T)$. 

Finally, we can use the fact that $S_C(T)$ is circular--which follows easily by replacing  
$U$ by $e^{i\varphi} U\in\mathcal U(\mathcal X)$ with $\varphi\in [0,2\pi]$--to conclude $S_C(T)\supseteq K_r(0)$
and thus $S_C(T) = K_r(0)$.
\end{proof}

\begin{theorem}
Let $C\in\mathcal B^p(\mathcal X,\mathcal Y)$, $T\in\mathcal B^q(\mathcal Y,\mathcal X)$ with $p,q\in[1,\infty]$ conjugate. Then 
\begin{equation}\label{eq:von-Neumann-infinite-dim}
\sup_{U \in \;\mathcal U(X) , V \in \;\mathcal U(Y)}|\operatorname{tr}(CUTV)|
= \sum\nolimits_{j=1}^\infty s_j(C)s_j(T)\,.
\end{equation}
In particular, one has $\overline{S_C(T)}=K_r(0)$ with $r:=\sum_{j=1}^\infty s_j(C)s_j(T)$.
\end{theorem}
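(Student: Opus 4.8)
The plan is to deduce the statement from the finite-rank case of Lemma~\ref{lemma_S_C_finite_rank} together with the approximation Proposition~\ref{prop_1}. Fix Schmidt decompositions $C=\sum_{j=1}^\infty s_j(C)\langle f_j,\cdot\rangle g_j$ and $T=\sum_{j=1}^\infty s_j(T)\langle \tilde f_j,\cdot\rangle \tilde g_j$ as furnished by Lemma~\ref{thm_1}, and let $C_n:=\sum_{j=1}^n s_j(C)\langle f_j,\cdot\rangle g_j\in\mathcal F(\mathcal X,\mathcal Y)$ and $T_n:=\sum_{j=1}^n s_j(T)\langle \tilde f_j,\cdot\rangle \tilde g_j\in\mathcal F(\mathcal Y,\mathcal X)$ be the corresponding rank-$\le n$ truncations. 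By Remark~\ref{rem_Schatten_p} (and, in the endpoint case $p=\infty$ resp.\ $q=\infty$, by the operator-norm convergence asserted in Lemma~\ref{thm_1}) we have $\|C-C_n\|_p\to 0$ and $\|T-T_n\|_q\to 0$; since finite-rank operators belong to every Schatten class, $C_n$ and $T_n$ satisfy the hypotheses of Proposition~\ref{prop_1}.

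Because the systems $(f_j),(g_j),(\tilde f_j),(\tilde g_j)$ are orthonormal, the nonzero singular values of $C_n$ are precisely $s_1(C)\ge s_2(C)\ge\cdots$ cut off at level $n$, and likewise for $T_n$; hence Lemma~\ref{lemma_S_C_finite_rank} yields $S_{C_n}(T_n)=K_{r_n}(0)$ with $r_n=\sum_{j=1}^n s_j(C)s_j(T)$. The sequence $(r_n)_{n\in\mathbb N}$ increases to $r=\sum_{j=1}^\infty s_j(C)s_j(T)$, which is finite by Hölder's inequality for sequences, $\sum_j s_j(C)s_j(T)\le\|C\|_p\|T\|_q$ (exactly as in the discussion following Definition~\ref{defi_3}). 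Since each $K_{r_n}(0)$ is already closed and compact with $\Delta\big(K_{r_n}(0),K_r(0)\big)=|r_n-r|\to 0$, we get $\overline{S_{C_n}(T_n)}\to K_r(0)$ in the Hausdorff metric. On the other hand, Proposition~\ref{prop_1} gives $\overline{S_{C_n}(T_n)}\to\overline{S_C(T)}$, and as the Hausdorff metric is a genuine metric, limits are unique, so $\overline{S_C(T)}=K_r(0)$.

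The identity~\eqref{eq:von-Neumann-infinite-dim} then follows at once: the supremum of $|\operatorname{tr}(CUTV)|$ over $U\in\mathcal U(\mathcal X)$, $V\in\mathcal U(\mathcal Y)$ equals $\sup_{z\in S_C(T)}|z|=\sup_{z\in\overline{S_C(T)}}|z|=\sup_{z\in K_r(0)}|z|=r=\sum_{j=1}^\infty s_j(C)s_j(T)$. There is no serious obstacle here, since all the machinery is already in place; the only points demanding care are the Schatten-norm convergence of the truncations uniformly over all conjugate pairs (in particular the endpoints $p\in\{1,\infty\}$) and the bookkeeping of singular values under truncation, ensuring that $r_n$ is genuinely $\sum_{j=1}^n s_j(C)s_j(T)$ and hence tends to $r$.
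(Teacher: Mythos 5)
Your proposal is correct and follows essentially the same route as the paper: truncate the Schmidt decompositions, apply Lemma~\ref{lemma_S_C_finite_rank} to get $S_{C_n}(T_n)=K_{r_n}(0)$, invoke Proposition~\ref{prop_1} together with $\Delta(K_{r_n}(0),K_r(0))=|r_n-r|\to 0$ to conclude $\overline{S_C(T)}=K_r(0)$, and read off the supremum. Your extra remarks on the endpoint cases $p\in\{1,\infty\}$ and the bookkeeping of singular values under truncation are sound but do not change the argument.
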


\begin{proof} 
By Lemma \ref{thm_1} $C=\sum_{j=1}^\infty s_j(C)\langle e_j,\cdot\rangle f_j$, 
$T=\sum_{j=1}^\infty s_j(T)\langle g_j,\cdot\rangle h_j$ for some orthonormal systems $(e_j)_{j\in\mathbb N}$,
$(h_j)_{j\in\mathbb N}$ in $\mathcal X$ and $(f_j)_{j\in\mathbb N}$, $(g_j)_{j\in\mathbb N}$ in $\mathcal Y$. This allows 
us to define finite rank approximations $C_n:=\sum_{j=1}^n  s_j(C)\langle e_j,\cdot\rangle f_j$
and $T_n:=\sum_{j=1}^n s_j(T)\langle g_j,\cdot\rangle h_j$ 
To pass to the original operators $C,T$, 
we use Remark \ref{rem_Schatten_p} to see
\begin{equation*}
\lim_{n\to\infty}\|C_n-C\|_p=0\qquad \text{and} \qquad\lim_{n\to\infty}\|T_n-T\|_q=0\,.
\end{equation*}
%
Because of this we may apply Proposition \ref{prop_1} and Lemma \ref{lemma_S_C_finite_rank} to obtain
$$
\overline{S_C(T)}=\lim_{n\to\infty}\overline{S_{C_n}(T_n)}=\lim_{n\to\infty} K_{r_n}(0)
$$
with $r_n:=\sum_{j=1}^n s_j(C)s_j(T)$. Using the obvious fact $\Delta(K_r(0),K_{r_n}(0))=|r-r_n|$ for all $n\in\mathbb N$ 
one readily verifies
$
\overline{S_C(T)}=\lim_{n\to\infty} K_{r_n}(0)=K_r(0)
$ with  $r=\sum_{j=1}^\infty s_j(C)s_j(T)$.
\end{proof}

\begin{remark}
To see that the supremum in \eqref{eq:von-Neumann-infinite-dim} is not necessarily a maximum, consider $\mathcal H=\ell_2(\mathbb N)$ with standard basis $(e_j)_{j\in\mathbb N}$. Now the positive definite trace-class operator $C=\sum_{j=1}^\infty \frac{1}{2^j}\langle e_j,\cdot\rangle e_j$ as well as the compact operator $T=\sum_{k=1}^\infty \frac{1}{2^k}\langle e_{k+1},\cdot\rangle e_{k+1}$ satisfy
$$
\operatorname{tr}(CUTV)=  \sum\nolimits_{j=1}^\infty \frac{1}{2^j}\langle e_j, UTV e_j\rangle=\sum\nolimits_{j,k=1}^\infty \frac{1}{2^j} \frac{1}{2^k}\langle e_j,Ue_{k+1}\rangle\langle e_{k+1},Ve_j\rangle
$$
for any $U,V\in\mathcal U(\mathcal H)$. We know that $\sup_{U,V\in\mathcal U(\mathcal H)}|\operatorname{tr}(CUTV)|=\sum_{j=1}^\infty(\frac{1}{2^j})^2$ but if this was a maximum, then by the above calculation $\langle e_j,Ue_{k+1}\rangle=\langle e_{k+1},Ve_j\rangle=\delta_{jk}$ for all $j,k\in\mathbb N$. The only operators which satisfy these conditions are 
the left- and the right-shift, respectively, both of which are not unitary--a contradiction.
\end{remark}

Finally, we are prepared to extend inequality \eqref{eq:von_Neumann-hermitian} to Schatten-class operators 
on separable Hilbert spaces.

\begin{theorem}
Let $C\in\mathcal B^p(\mathcal H)$, $T\in\mathcal B^q(\mathcal H)$ both be self-adjoint with $p,q$ conjugate and 
let the positive semi-definite operators $C^+,T^+$ and $C^-,T^-$ denote the positive and negative part of $C,T$, 
respectively (i.e.~$C=C^+-C^-$, $T=T^+-T^-$). Then
\begin{equation}\label{eq:sup_WCT}
\operatorname{sup}_{U\in\mathcal U(\mathcal H)}\operatorname{tr}(CU^\dagger TU)
= \sum\nolimits_{j=1}^\infty \big(\lambda_j^\downarrow (C^+)\lambda_j^\downarrow (T^+) 
+ \lambda_j^\downarrow (C^-)\lambda_j^\downarrow (T^-)\big)
\end{equation}
as well as
\begin{equation}\label{eq:inf_WCT}
\operatorname{inf}_{U\in\mathcal U(\mathcal H)}\operatorname{tr}(CU^\dagger TU)
=- \sum\nolimits_{j=1}^\infty \big(\lambda_j^\downarrow (C^+)\lambda_j^\downarrow (T^-)
+ \lambda_j^\downarrow (C^-)\lambda_j^\downarrow (T^+)\big)\,.
\end{equation}
 In particular, one has:\\
 \scalebox{0.87}{
\begin{minipage}{1.14\textwidth}
$$
-\sum_{j=1}^\infty \big(\lambda_j^\downarrow (C^+)\lambda_j^\downarrow (T^-)+ \lambda_j^\downarrow (C^-)\lambda_j^\downarrow (T^+)\big)
\leq \operatorname{tr}(CT)\leq 
\sum_{j=1}^\infty \big(\lambda_j^\downarrow (C^+)\lambda_j^\downarrow (T^+)+\lambda_j^\downarrow (C^-)\lambda_j^\downarrow (T^-)\big)
$$
\end{minipage}
}
\end{theorem}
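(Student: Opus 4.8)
The plan is to deduce \eqref{eq:sup_WCT} and \eqref{eq:inf_WCT} from the matrix inequality \eqref{eq:von_Neumann-hermitian} via a finite-rank truncation, passing to the limit with the help of Proposition \ref{prop_1} and Lemma \ref{lemma_lim_max}. First I would note that, $C$ and $T$ being self-adjoint, $\operatorname{tr}(CU^\dagger TU)$ is real for every $U\in\mathcal U(\mathcal H)$ and bounded in modulus by $\|C\|_p\|T\|_q$ thanks to Lemma \ref{lemma_nu_hoelder}; hence $W_C(T)\subset\mathbb R$ is bounded, $\overline{W_C(T)}$ is compact, and
$$
\sup_{U}\operatorname{tr}(CU^\dagger TU)=\max\overline{W_C(T)},\qquad \inf_{U}\operatorname{tr}(CU^\dagger TU)=\min\overline{W_C(T)}\,.
$$
Moreover, taking $U=\operatorname{id}_{\mathcal H}$ shows $\operatorname{tr}(CT)\in W_C(T)$, so the final ``in particular'' statement is an immediate consequence of \eqref{eq:sup_WCT} and \eqref{eq:inf_WCT}.

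Next, diagonalizing $C$ and $T$ by Lemma \ref{lemma_berb_4_6}, I would define for each $N\in\mathbb N$ the finite-rank self-adjoint operator $C_N$ obtained from $C$ by discarding all spectral contributions except those belonging to the $N$ largest eigenvalues of $C^+$ and the $N$ largest eigenvalues of $C^-$ (retaining all of them if there are fewer than $N$), and analogously $T_N$. Then the non-zero singular values of $C-C_N$ are precisely $\{\lambda_j^\downarrow(C^+),\lambda_j^\downarrow(C^-)\mid j>N\}$, so $\|C-C_N\|_p\to0$ because $\sum_j\lambda_j^\downarrow(C^+)^p+\sum_j\lambda_j^\downarrow(C^-)^p=\|C\|_p^p<\infty$, and likewise $\|T-T_N\|_q\to0$. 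Proposition \ref{prop_1} then yields $\overline{W_{C_N}(T_N)}\to\overline{W_C(T)}$ in the Hausdorff metric, and since this is a uniformly bounded sequence of compact subsets of $\mathbb R$ (bounded by $\|C\|_p\|T\|_q$), Lemma \ref{lemma_lim_max} gives $\max\overline{W_C(T)}=\lim_N\max\overline{W_{C_N}(T_N)}$ and $\min\overline{W_C(T)}=\lim_N\min\overline{W_{C_N}(T_N)}$.

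The core of the argument is the finite-rank identity
$$
\max_{U}\operatorname{tr}(C_NU^\dagger T_NU)=\sum\nolimits_{j=1}^N\big(\lambda_j^\downarrow(C^+)\lambda_j^\downarrow(T^+)+\lambda_j^\downarrow(C^-)\lambda_j^\downarrow(T^-)\big)
$$
together with its counterpart for the minimum (with $T^+$ and $T^-$ interchanged and an overall minus sign). For ``$\le$'' I would fix $U$, pick a finite-dimensional subspace $\mathcal H_d\supseteq\operatorname{ran}(C_N)+\operatorname{ran}(U^\dagger T_NU)$ of dimension $d=6N$ -- possible uniformly in $U$ since $\operatorname{rk}(T_N)\le2N$ -- by padding with vectors from $\ker C_N\cap\ker(U^\dagger T_NU)$, observe that $\mathcal H_d^\perp\subseteq\ker C_N\cap\ker(U^\dagger T_NU)$ so that the trace is unchanged when both operators are restricted to $\mathcal H_d\cong\mathbb C^d$, and apply \eqref{eq:von_Neumann-hermitian} there. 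For ``$\ge$'' I would construct a unitary $U$ mapping the $j$-th largest positive eigenvector of $C_N$ to that of $T_N$, the $j$-th largest negative eigenvector of $C_N$ to that of $T_N$, and the remaining positive resp.\ negative eigenvectors of $C_N$ into $\ker T_N$; such a $U$ exists by completing the relevant orthonormal systems to orthonormal bases, exactly as in the proof of Lemma \ref{lemma_S_C_finite_rank}. Inserting these two identities into the limit relations of the previous paragraph and letting $N\to\infty$ -- each partial sum converging to the corresponding series, which is finite by H\"older's inequality as in the discussion following Definition \ref{defi_3} -- establishes \eqref{eq:sup_WCT} and \eqref{eq:inf_WCT}.

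I expect the main obstacle to be exactly this finite-rank identity: the genuinely finite-dimensional bound \eqref{eq:von_Neumann-hermitian} only gives $\operatorname{tr}(C_NU^\dagger T_NU)\le\sum_j\lambda_j^\downarrow(C_N)\lambda_j^\downarrow(T_N)$, and in a fixed $\mathbb C^n$ unequal numbers of positive and negative eigenvalues would force some positive-negative pairings, making this strictly smaller than the desired sum. What saves the day is that the ambient infinite-dimensional space always supplies at least $2N$ zero eigenvalues to both $C_N$ and $U^\dagger T_NU$ once $d$ is large enough, so that the decreasingly sorted eigenvalue vectors pair positive with positive and negative with negative, with no forced cross-term; carrying out this bookkeeping carefully is the one genuinely delicate step.
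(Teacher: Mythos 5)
Your proof is correct, but it takes a genuinely different route from the paper's. The paper reduces everything to the $C$-spectrum: it truncates only $T$ (via the projections $\Pi_k$ onto spans of eigenvectors, using Corollary \ref{lemma_2}), invokes the structural result of Theorem \ref{theorem_3}(c) that $\overline{W_C(T)}=\operatorname{conv}(\overline{P_C(T)})$ for self-adjoint $C,T$, computes $\max$ and $\min$ of $\operatorname{conv}(\overline{P_C(\Pi_kT\Pi_k)})$ combinatorially, and passes to the limit with Lemma \ref{lemma_lim_max}. You instead bypass the $C$-spectrum and Theorem \ref{theorem_3} entirely: you truncate \emph{both} operators to finite rank, apply Proposition \ref{prop_1} and Lemma \ref{lemma_lim_max} to reduce to a finite-rank identity, and prove that identity directly from von Neumann's hermitian inequality \eqref{eq:von_Neumann-hermitian} by restricting to a padded finite-dimensional subspace (for the upper bound) and exhibiting an aligning unitary (for attainment). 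This mirrors how the paper itself treats the $S_C(T)$ theorem via Lemma \ref{lemma_S_C_finite_rank}, and it buys self-containedness --- no appeal to the appendix, to convexity of $\overline{W_C(T)}$, or to the Poon/Sunder finite-dimensional results --- at the cost of the zero-padding bookkeeping you correctly identify as the delicate step: choosing $d=6N$ guarantees at least $4N$ zero eigenvalues in each restricted operator, so the decreasing sort pairs positive with positive and negative with negative with no forced cross-terms, which is exactly what makes the finite-rank identity come out right. The paper's route, in exchange, yields the stronger geometric statement $\overline{W_C(T)}=\operatorname{conv}(\overline{P_C(T)})$ as a byproduct, which your argument does not recover.
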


\begin{proof}
Let $C\in\mathcal B^p(\mathcal H)$, $T\in\mathcal B^q(\mathcal H)$ both be self-adjoint with $p,q$ conjugate and first 
assume that $T$ has at most $k\in\mathbb N$ non-zero eigenvalues. Then the following is straightforward to show:
\begin{align*}
\max \operatorname{conv}(\overline{P_C(T)}) &=\hphantom{-} 
\sum\nolimits_{j=1}^k\lambda_j^\downarrow(C^+) \lambda_{j}^\downarrow(T^+) 
+ \sum\nolimits_{j=1}^k \lambda_j^\downarrow(C^-) \lambda_{j}^\downarrow(T^-)\\
\min \operatorname{conv}(\overline{P_C(T)}) &=- 
\sum\nolimits_{j=1}^k\lambda_j^\downarrow(C^+) \lambda_{j}^\downarrow(T^-) 
- \sum\nolimits_{j=1}^k \lambda_j^\downarrow(C^-) \lambda_{j}^\downarrow(T^+)
\end{align*}
Note that in this case the (modified) eigenvalue sequences of $T$ contains infinitely
many zeros. Now let us address the general case. Choose any orthonormal eigenbasis $(e_n)_{n\in\mathbb N}$ of $T$ 
with corresponding modified eigenvalue sequence (Lemma \ref{lemma_berb_4_6}). Moreover, let 
$\Pi_k=\sum\nolimits_{j=1}^k\langle e_j,\cdot\rangle e_j$ the projection onto the span
of the first $k$ eigenvectors of $T$. Then $\Pi_kT\Pi_k$ has at most $k$ non-zero eigenvalues and our 
preliminary considerations combined with Corollary  \ref{lemma_2} and
Theorem \ref{theorem_3} (c) as well as Lemma \ref{lemma_lim_max} readily imply
\begin{align*}
\sup_{U\in\mathcal U(\mathcal H)}&\operatorname{tr}(C U^\dagger TU)
= \max \overline{W_C(T)} = \max \lim_{k\to\infty} \overline{W_C(\Pi_kT\Pi_k)}\\
& =\lim_{k\to\infty}\max \overline{W_C(\Pi_kT\Pi_k)} 
= \lim_{k\to\infty}\max \operatorname{conv}(\overline{P_C(\Pi_kT\Pi_k)})\\
& =\lim_{k\to\infty}\Big(\sum\nolimits_{j=1}^k \lambda_j^\downarrow(C^+) \lambda_{j}^\downarrow(\Pi_kT^+\Pi_k)
+ \sum\nolimits_{j=1}^k \lambda_j^\downarrow(C^-) \lambda_{j}^\downarrow(\Pi_kT^-\Pi_k)\Big)
\end{align*}
where we used the identity $(\Pi_kT\Pi_k)^\pm = \Pi_kT^\pm\Pi_k$. Now, the last step is to show 
that $(\sum\nolimits_{j=1}^k \lambda_j^\downarrow(C^+)\lambda_j^\downarrow(\Pi_kT^+\Pi_k))_{k\in\mathbb N}$ converges to
$\sum\nolimits_{j=1}^\infty\lambda_j^\downarrow(C^+)\lambda_j^\downarrow(T^+)$. Let $\varepsilon>0$
(and w.l.o.g.~$T\neq 0$). As $(\lambda_j^\downarrow(C^+))_{j\in\mathbb N}$ is a sequence in $\ell^p_+(\mathbb N)$ 
we find $N\in\mathbb N$ with
$$
\Big(\sum\nolimits_{j=N+1}^\infty\big(\lambda_j^\downarrow(C^+)\big)^p\Big)^{1/p}<\frac{\varepsilon}{2\|T\|_q}
$$
where for $p=\infty$, the left-hand side becomes $\sup_{n>N} \lambda_n^\downarrow(C^+)=\lambda_{N+1}^\downarrow(C^+)\,$. 

Either way, associated to this $N$ one can choose $K\geq N$ such that the first $N$ largest eigenvalues of $T^+$ are
listed  in $(\lambda_j^\downarrow(\Pi_KT^+\Pi_K))_{j\in\mathbb N}$ and thus $\lambda_j^\downarrow(T^+)=\lambda_j^\downarrow(\Pi_KT^+\Pi_K)$ for all $j=1,\ldots,N$. Putting things together and using H\"older's inequality yields
\begin{align*}
\Big| \sum\nolimits_{j=1}^K \lambda_j^\downarrow(C^+) &\lambda_{j}^\downarrow(\Pi_KT^+\Pi_K)-\sum\nolimits_{j=1}^\infty\lambda_j^\downarrow(C^+)\lambda_j^\downarrow(T^+)\Big|\\
&=\Big| \sum\nolimits_{j=N+1}^K \lambda_j^\downarrow(C^+)\lambda_{j}^\downarrow(\Pi_KT^+\Pi_K)-\sum\nolimits_{j=N+1}^\infty\lambda_j^\downarrow(C^+)\lambda_{j}^\downarrow(T^+)\Big|\\
&\leq2\|T^+\|_q \Big(\sum\nolimits_{j=N+1}^\infty\big(\lambda_j^\downarrow(C^+)\big)^p\Big)^{1/p}<2\|T\|_q\frac{\varepsilon}{2\|T\|_q}=\varepsilon\,.
\end{align*} 
The case of $C^-,T^-$ as well as the infimum-estimate are shown analogously which concludes the proof.
\end{proof}

Therefore if $C,T$ are self-adjoint (i.e.~$W_C(T)\subseteq\mathbb R$), a path-connectedness argument similar to the proof of Lemma \ref{lemma_S_C_finite_rank} shows $(a,b)\subseteq W_C(T)\subseteq [a,b]$ with $a$ ($\leq 0$) given by \eqref{eq:inf_WCT} and $b$ ($\geq 0$) given by \eqref{eq:sup_WCT}. In particular, $\overline{W_C(T)}=[a,b]$.

\begin{acknowledgements} This work was supported 
by the Bavarian excellence network {\sc enb}
via the \mbox{International} PhD Programme of Excellence
{\em Exploring Quantum Matter} ({\sc exqm}).
\end{acknowledgements}

%

\section{APPENDIX}\label{appendix}
\renewcommand{\thesubsection}{\Alph{subsection}}
\subsection{PROOF OF THEOREM \ref{theorem_3}}\label{app_A}

The overall idea is to transfer properties of $W_C(T)$ from finite to infinite dimensions via
the set convergence introduced in Section \ref{sec:set_conv}. However, we first need two auxiliary
results to characterize the star-center of $\overline{W_C(T)}$ later on.

\begin{lemma}\label{lemma_2b}
Let $T\in\mathcal K(\mathcal X)$ and $(e_k)_{k\in\mathbb N}$ be any orthonormal system in $\mathcal X$. Then\vspace{2pt}
\begin{itemize}
\item[(a)]
$
\displaystyle
\sum\nolimits_{k=1}^n|\langle e_k,Te_k\rangle|
\leq\sum\nolimits_{k=1}^n s_k(T)
$
for all $n\in\mathbb N$ and\vspace{6pt}
\item[(b)]
$\lim_{k \to \infty} \langle e_k,Te_k\rangle = 0\,.$
\end{itemize}
\end{lemma}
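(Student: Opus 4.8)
The two assertions are really statements about the diagonal entries of a compact operator written in terms of an orthonormal system, and both should follow from the Schmidt decomposition (Lemma \ref{thm_1}) together with elementary estimates. Write $T=\sum_{m=1}^\infty s_m(T)\langle f_m,\cdot\rangle g_m$ with $(f_m)$, $(g_m)$ orthonormal systems and $(s_m(T))$ a decreasing null sequence. Then for each $k$ we have $\langle e_k,Te_k\rangle=\sum_{m=1}^\infty s_m(T)\langle e_k,g_m\rangle\langle f_m,e_k\rangle$, so $|\langle e_k,Te_k\rangle|\le\sum_m s_m(T)|\langle g_m,e_k\rangle||\langle f_m,e_k\rangle|$.

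For part (a) the plan is to sum this over $k=1,\dots,n$, interchange the (absolutely convergent) sums, apply Cauchy--Schwarz to $\sum_{k=1}^n|\langle g_m,e_k\rangle||\langle f_m,e_k\rangle|\le\big(\sum_{k=1}^n|\langle g_m,e_k\rangle|^2\big)^{1/2}\big(\sum_{k=1}^n|\langle f_m,e_k\rangle|^2\big)^{1/2}$, and then use Bessel's inequality, which gives both factors $\le 1$ and also $\sum_{k=1}^n|\langle g_m,e_k\rangle|^2\le 1$, $\sum_{k=1}^n|\langle f_m,e_k\rangle|^2\le 1$. That alone only yields the crude bound $\sum_{k=1}^n|\langle e_k,Te_k\rangle|\le\sum_m s_m(T)$, which need not be finite; the point is to exploit that we are summing only $n$ terms. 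The right way is to regard $a_{km}:=|\langle g_m,e_k\rangle\langle f_m,e_k\rangle|$ as the entries of a ``doubly substochastic'' array: $\sum_m a_{km}\le(\sum_m|\langle g_m,e_k\rangle|^2)^{1/2}(\sum_m|\langle f_m,e_k\rangle|^2)^{1/2}\le 1$ for each $k$, and $\sum_k a_{km}\le 1$ for each $m$ by the same Cauchy--Schwarz/Bessel combination. Since $(s_m(T))$ is nonincreasing, $\sum_{k=1}^n\sum_m s_m(T)a_{km}\le\sum_{m=1}^n s_m(T)$: this is the standard fact that a nonnegative array with all row sums and column sums $\le 1$, paired against a nonincreasing sequence and summed over $n$ rows, is maximized by putting the mass on the diagonal of the leading $n\times n$ block. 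I would cite this as the elementary doubly-stochastic/majorization estimate (or prove it in two lines by Abel summation: $\sum_{k=1}^n\sum_m s_m(T)a_{km}=\sum_m s_m(T)b_m$ with $b_m:=\sum_{k=1}^n a_{km}\in[0,1]$ and $\sum_m b_m\le n$, and a nonincreasing sequence integrated against weights $b_m\in[0,1]$ summing to $\le n$ is at most $\sum_{m=1}^n s_m(T)$).

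For part (b), since $(e_k)$ is an orthonormal system it converges weakly to $0$, and $T$ is compact, hence $Te_k\to 0$ in norm; therefore $\langle e_k,Te_k\rangle\to 0$ by Cauchy--Schwarz. Alternatively, staying self-contained: from $|\langle e_k,Te_k\rangle|\le\sum_m s_m(T)|\langle g_m,e_k\rangle||\langle f_m,e_k\rangle|$, split the sum at a fixed $M$; the tail is bounded by $s_{M+1}(T)\sum_{m>M}|\langle g_m,e_k\rangle||\langle f_m,e_k\rangle|\le s_{M+1}(T)$ uniformly in $k$, which is small by choosing $M$ large, while the finite head $\sum_{m=1}^M s_m(T)|\langle g_m,e_k\rangle||\langle f_m,e_k\rangle|$ tends to $0$ as $k\to\infty$ because $|\langle g_m,e_k\rangle|\to 0$ for each fixed $m$ (Bessel: $\sum_k|\langle g_m,e_k\rangle|^2\le\|g_m\|^2<\infty$). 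Combining gives $\limsup_k|\langle e_k,Te_k\rangle|\le s_{M+1}(T)$ for every $M$, hence the limit is $0$.

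The only mildly delicate point is the Abel-summation/majorization step in (a) — getting $\sum_{m=1}^n$ on the right rather than the useless $\sum_{m=1}^\infty$ — so I would make sure to phrase that carefully; everything else is Cauchy--Schwarz, Bessel, and compactness.
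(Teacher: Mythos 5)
Your proof of (a) is essentially the paper's own argument: the same Schmidt decomposition, the same Cauchy--Schwarz/Bessel bounds showing that the weights $\lambda_m=\sum_{k=1}^n|\langle e_k,f_m\rangle\langle g_m,e_k\rangle|$ lie in $[0,1]$ and sum to at most $n$, and the same rearrangement step pairing them against the nonincreasing singular values (your Abel-summation justification of that step is correct and in fact more explicit than the paper's). For (b) the paper merely cites \cite[Lemma 16.17]{MeiseVogt97en}, and both of your arguments --- weak-to-norm continuity of compact operators applied to the weakly null sequence $(e_k)$, or the tail-splitting of the Schmidt series --- are correct and standard.
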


\begin{proof}
(a) Consider a Schmidt decomposition $\sum\nolimits_{m=1}^\infty s_m(T)\langle f_m,\cdot\rangle g_m$ of $T$ so
\begin{align*}
\sum\nolimits_{k=1}^n|\langle e_k,Te_k\rangle|\leq \sum\nolimits_{m=1}^\infty s_m(T)\Big(\sum\nolimits_{k=1}^n |\langle e_k,f_m\rangle\langle g_m,e_k\rangle|\Big)\,.
\end{align*}
Defining $\lambda_m:= \sum_{k=1}^n |\langle e_k,f_m\rangle\langle g_m,e_k\rangle| $ for all $m\in\mathbb N$, using Cauchy-Schwarz and Bessel's inequality one gets
\begin{align*}
\lambda_m\leq \Big(\sum\nolimits_{k=1}^n |\langle e_k,f_m\rangle|^2\Big)^{1/2}\Big(\sum\nolimits_{k=1}^n |\langle g_m,e_k\rangle|^2\Big)^{1/2}\leq 1
\end{align*}
for all $m\in\mathbb N$. On the other hand, said inequalities also imply
\begin{align*}
\sum_{m=1}^\infty \lambda_m&\leq \sum_{k=1}^n \Big(\sum_{m=1}^\infty |\langle e_k,f_m\rangle|^2\Big)^{1/2}\Big(\sum_{m=1}^\infty |\langle g_m,e_k\rangle|^2\Big)^{1/2}\leq \sum_{k=1}^n \|e_k  \|^2=n\,.
\end{align*}
Hence, because $(s_m(T))_{m \in \mathbb N}$ is decreasing by construction, an upper bound of 
$\sum\nolimits_{m=1}^\infty s_m(T)\lambda_m$ is obtained by choosing $\lambda_1=\ldots=\lambda_n=1$
and $\lambda_j=0$ whenever $j>n$. This shows the 
desired inequality. A proof of (b) can be found, e.g., in \cite[Lemma 16.17]{MeiseVogt97en}.
\end{proof}

\begin{lemma}\label{lemma_0_conv}
Let $C\in\mathcal B^p(\mathcal H)$ with $p\in( 1,\infty]$ and let $q\in[1,\infty)$ such that $p,q$ are conjugate. Furthermore, let $(e_n)_{n\in\mathbb N}$ be any orthonormal system in
$\mathcal H$. Then
\begin{align*}
\lim_{n\to\infty}\frac{1}{n^{1/q}}\sum\nolimits_{k=1}^n\langle e_k,Ce_k\rangle=0\,.
\end{align*}
\end{lemma}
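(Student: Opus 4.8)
The plan is to split $C$ into a finite-rank head and a Schatten-small tail and to bound the two resulting diagonal sums separately: the head via Lemma \ref{lemma_2b}(a) (which makes it $O(1)$, hence negligible after dividing by $n^{1/q}$) and the tail via Lemma \ref{lemma_2b}(a) combined with H\"older's inequality (which produces exactly the factor $n^{1/q}$, so that the normalized tail stays below the chosen tolerance).

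Concretely, I would fix $\varepsilon>0$ and, using Remark \ref{rem_Schatten_p}, write $C=C_1+C_2$ where $C_1=\sum_{m=1}^{M}s_m(C)\langle f_m,\cdot\rangle g_m$ is the truncated Schmidt decomposition of $C$ and $\|C_2\|_p<\varepsilon$. Then $\sum_{k=1}^n\langle e_k,Ce_k\rangle=\sum_{k=1}^n\langle e_k,C_1e_k\rangle+\sum_{k=1}^n\langle e_k,C_2e_k\rangle$, and the two pieces are handled as follows. For the head, since $C_1$ has finite rank, $R:=\sum_{k=1}^\infty s_k(C_1)<\infty$, so Lemma \ref{lemma_2b}(a) gives $\bigl|\sum_{k=1}^n\langle e_k,C_1e_k\rangle\bigr|\le\sum_{k=1}^n s_k(C_1)\le R$ for all $n$, and because $q<\infty$ we get $n^{-1/q}\bigl|\sum_{k=1}^n\langle e_k,C_1e_k\rangle\bigr|\le Rn^{-1/q}\to 0$. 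For the tail, Lemma \ref{lemma_2b}(a) together with H\"older's inequality for finite sequences (exponents $p,q$, with the constant sequence $1$ in the $q$-slot) yields $\sum_{k=1}^n|\langle e_k,C_2e_k\rangle|\le\sum_{k=1}^n s_k(C_2)\le\bigl(\sum_{k=1}^n s_k(C_2)^p\bigr)^{1/p}n^{1/q}\le\|C_2\|_p\,n^{1/q}<\varepsilon\,n^{1/q}$, where for $p=\infty$, $q=1$ the middle estimate reads $\sum_{k=1}^n s_k(C_2)\le n\,\|C_2\|_\infty$. Hence $n^{-1/q}\bigl|\sum_{k=1}^n\langle e_k,C_2e_k\rangle\bigr|<\varepsilon$ for every $n$. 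Combining the two bounds gives $\limsup_{n\to\infty}n^{-1/q}\bigl|\sum_{k=1}^n\langle e_k,Ce_k\rangle\bigr|\le\varepsilon$, and letting $\varepsilon\downarrow 0$ concludes the argument.

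I expect essentially no serious obstacle here; the one conceptual point worth flagging is that the pointwise decay $\langle e_k,Ce_k\rangle\to 0$ from Lemma \ref{lemma_2b}(b) is not enough on its own — a Ces\`aro argument would only dispose of the case $q=1$ (that is, $p=\infty$), and to beat the normalization $n^{1/q}$ for $q>1$ one genuinely has to use the $\ell^p$-summability of the singular values. The finite-rank splitting is precisely the mechanism that converts "$\ell^p$-small tail of the singular values" into "at most $\varepsilon\,n^{1/q}$ after summation"; the remaining steps are routine.
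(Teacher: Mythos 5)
Your proof is correct and follows essentially the same route as the paper: bound the diagonal sums by partial sums of singular values via Lemma \ref{lemma_2b}(a), split off a finite head whose contribution is $O(1)$ and hence killed by $n^{-1/q}$, and apply H\"older's inequality to the tail to extract the factor $n^{1/q}$. The only (harmless) difference is that the paper treats $p=\infty$ separately by a Ces\`aro argument using Lemma \ref{lemma_2b}(b), whereas your splitting handles that case uniformly.
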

\begin{proof}
First, let $p=\infty$, so $q=1$. As $C$ is compact, by Lemma \ref{lemma_2b} (b) one has
$\lim_{k\to\infty}\langle e_k,Ce_k\rangle=0$, hence the sequence of arithmetic means converges
to zero as well. Next, let $p\in(1,\infty)$ and $\varepsilon>0$. Moreover, we assume w.l.o.g.~$C\neq 0$
so $s_1(C) = \Vert C\Vert\neq 0$. As $C\in \mathcal B^p(\mathcal H)$, one can choose $N_1\in\mathbb N$
such that $\sum_{k=N_1+1}^\infty s_k(C)^p<\frac{\varepsilon^p}{2^p}$ and moreover $N_2\in\mathbb N$ such 
that $\frac{1}{n^{1/q}}<\frac{\varepsilon}{2\sum\nolimits_{k=1}^{N_1}s_k(C)}$ for all $n\geq N_2$. Then, for any 
$n\geq N:=\max\lbrace N_1+1,N_2\rbrace$, Lemma \ref{lemma_2b}
and H\"older's inequality yield the estimate
\begin{align*}
\Big|\frac{1}{n^{1/q}}\sum\nolimits_{k=1}^n\langle e_k,Ce_k\rangle\Big|&\leq \frac{1}{n^{1/q}}\sum\nolimits_{k=1}^{N_1}s_k(C)+\frac{1}{n^{1/q}}\sum\nolimits_{k=N_1+1}^{n}s_k(C)\\
&\leq \frac{1}{n^{1/q}}\sum_{k=1}^{N_1}s_k(C)+\Big(\sum_{k=N_1+1}^{n} s_k(C)^p \Big)^{1/p}\Big(\sum_{k=N_1+1}^{n} \frac{1}{n} \Big)^{1/q}\\
&<\frac{\varepsilon}{2}+\Big(\sum\nolimits_{k=N_1+1}^{\infty} s_k(C)^p \Big)^{1/p}\Big( \frac{n-N_1}{n} \Big)^{1/q}\leq\varepsilon\,.\qedhere
\end{align*}
\end{proof}

What we also need is some mechanism to associate bounded operators on $\mathcal H$
with matrices. In doing so, let $(e_n)_{n\in\mathbb N} $ be some orthonormal basis of $\mathcal H$ and
let $(\hat e_i)_{i=1}^n$ be the standard basis of $\mathbb C^n$. For any $n\in\mathbb N$ we define $\Gamma_n:\mathbb C^n\to \mathcal H$, $\hat{e_i}\mapsto \Gamma_n(\hat e_i):=e_i$ and its linear extension to all of $\mathbb C^n$. With this, let 
\begin{align}\label{cut_out_operator}
[\;\cdot\;]_n:\mathcal B(\mathcal H)\to\mathbb C^{n\times n},\qquad A\mapsto [A]_n:=\Gamma_n^\dagger A\Gamma_n
\end{align}
be the operator which ``cuts out'' the upper $n\times n$ block of (the matrix representation of) $A$ 
with respect to $(e_n)_{n\in\mathbb N} $. The key result now is the following:

\begin{proposition}\label{lemma_6}
Let $C\in\mathcal B^p(\mathcal H)$, $T\in\mathcal B^q(\mathcal H)$ with $p,q\in [1,\infty]$ conjugate be given. Furthermore, let $(e_n)_{n\in\mathbb N}$ and $(g_n)_{n\in\mathbb N}$ be arbitrary orthonormal bases of $\mathcal H$. Then
\begin{align*}
\lim_{n\to\infty}W_{[C]^e_{2n}}([T]^g_{2n})=\overline{W_C(T)}
\end{align*}
where $[\,\cdot\,]_k^e$ and $[\,\cdot\,]_k^g$ are the maps given by \eqref{cut_out_operator} with respect to $(e_n)_{n\in\mathbb N}$ and $(g_n)_{n\in\mathbb N}$, respectively. Moreover, if $C$ are $T$ both are normal then
\begin{align*}
\lim_{n\to\infty}P_{[C]^e_n}([T]^g_n)= \overline{P_C(T)}\,.
\end{align*}
where $(e_n)_{n\in\mathbb N}$ and $(g_n)_{n\in\mathbb N}$ are the orthonormal bases of $\mathcal H$ which diagonalize $C$ and
$T$, respectively.
\end{proposition}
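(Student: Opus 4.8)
The plan is to read both identities as equalities of Hausdorff limits of compact subsets of $\mathbb C$, so that by Lemma~\ref{lemma_11} it suffices, for each $\varepsilon>0$, to find $N$ such that for all $n\ge N$ every point of the finite-dimensional set lies within $\varepsilon$ of the infinite-dimensional one, and vice versa. Throughout I abbreviate by $\Gamma^e_m\colon\mathbb C^m\to\mathcal H$ the isometry sending the standard basis to $e_1,\dots,e_m$, by $\Gamma^g_m$ the analogous map for $(g_n)$, and by $\Pi^e_m:=\Gamma^e_m(\Gamma^e_m)^\dagger$ the orthogonal projection onto $\operatorname{span}\{e_1,\dots,e_m\}$, which converges strongly to $\operatorname{id}_{\mathcal H}$. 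A cyclic rearrangement of the trace (using $(\Gamma^e_{2n})^\dagger\Pi^e_{2n}=(\Gamma^e_{2n})^\dagger$) shows
$$
W_{[C]^e_{2n}}([T]^g_{2n})=\bigl\{\operatorname{tr}(CA^\dagger TA)\ \big|\ A=\Gamma^g_{2n}U(\Gamma^e_{2n})^\dagger,\ U\in\mathcal U_{2n}\bigr\},
$$
i.e.\ $A$ ranges over all unitaries from $\operatorname{ran}\Pi^e_{2n}$ onto $\operatorname{ran}\Pi^g_{2n}$, extended by zero to $\mathcal H$.

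For the inclusion ``$\subseteq$'' of the first identity I would, given such an $A$, extend it to a unitary $V\in\mathcal U(\mathcal H)$ (possible since the two orthogonal complements are both infinite-dimensional and separable). Then $A=V\Pi^e_{2n}$, so $A^\dagger TA=\Pi^e_{2n}V^\dagger TV\Pi^e_{2n}$, whence by Lemma~\ref{lemma_nu_hoelder} and Lemma~\ref{lemma_10}
$$
\bigl|\operatorname{tr}(CA^\dagger TA)-\operatorname{tr}(CV^\dagger TV)\bigr|=\bigl|\operatorname{tr}\bigl((\Pi^e_{2n}C\Pi^e_{2n}-C)V^\dagger TV\bigr)\bigr|\le\|\Pi^e_{2n}C\Pi^e_{2n}-C\|_p\,\|T\|_q,
$$
which tends to $0$ by Lemma~\ref{lemma_proj_strong_conv} \emph{uniformly} in the choice of $A$, while $\operatorname{tr}(CV^\dagger TV)\in W_C(T)$; this settles one half of the Hausdorff condition.

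For the reverse inclusion I would first reduce to finite-rank $C$ and $T$: since $\|[C]^e_{2n}-[C_m]^e_{2n}\|_p\le\|C-C_m\|_p$ by Lemma~\ref{lemma_10}(a) (and likewise for $T$), a Hölder estimate as in Proposition~\ref{prop_1} bounds the Hausdorff distance between $W_{[C]^e_{2n}}([T]^g_{2n})$ and $W_{[C_m]^e_{2n}}([T_m]^g_{2n})$ uniformly in $n$, so together with \eqref{eq:W_C_n} a $3\varepsilon$-argument lets me assume $C=\sum_{j=1}^{K}s_j(C)\langle u_j,\cdot\rangle v_j$ (and $T$) of finite rank. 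A direct computation then gives $\operatorname{tr}(CA^\dagger TA)=\sum_{j=1}^{K}s_j(C)\langle Au_j,TAv_j\rangle$, which depends on $A$ only through its restriction to the finite-dimensional space $M:=\operatorname{span}\{u_1,\dots,u_K,v_1,\dots,v_K\}$. Hence, given $w=\operatorname{tr}(CU^\dagger TU)$ with $U\in\mathcal U(\mathcal H)$, it suffices to build unitaries $A_n$ from $\operatorname{ran}\Pi^e_{2n}$ onto $\operatorname{ran}\Pi^g_{2n}$ with $A_nx\to Ux$ for every $x\in M$, for then $\operatorname{tr}(CA_n^\dagger TA_n)\to w$ and every $\operatorname{tr}(CA_n^\dagger TA_n)$ lies in $W_{[C]^e_{2n}}([T]^g_{2n})$. \textbf{This construction of the operators $A_n$ is the main obstacle.} I would carry it out as follows: for large $n$ the maps $\Pi^e_{2n}|_M$ and $\Pi^g_{2n}|_{UM}$ are injective (strong convergence, $M$ finite-dimensional) and nearly isometric, so $\Pi^e_{2n}x\mapsto\Pi^g_{2n}Ux$ defines a near-isometry $\Pi^e_{2n}M\to\Pi^g_{2n}UM$; replacing it by the unitary part of its polar decomposition and extending arbitrarily to a unitary from $\operatorname{ran}\Pi^e_{2n}$ onto $\operatorname{ran}\Pi^g_{2n}$ (the orthogonal complements of $\Pi^e_{2n}M$ and $\Pi^g_{2n}UM$ inside these spaces having the same finite dimension $2n-\dim M$) yields $A_n$ with $A_nx=\Pi^g_{2n}Ux+o(1)\to Ux$. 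The point that makes this work is precisely the finite-rank reduction, which confines all relevant behaviour to the fixed finite-dimensional $M$, together with the strong convergence $\Pi^e_{2n},\Pi^g_{2n}\to\operatorname{id}_{\mathcal H}$.

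For the second identity, observe that since $(e_n)$ and $(g_n)$ diagonalize $C$ and $T$ the matrices $[C]^e_n$ and $[T]^g_n$ are diagonal with entries $\lambda_1(C),\dots,\lambda_n(C)$ resp.\ $\lambda_1(T),\dots,\lambda_n(T)$, so $P_{[C]^e_n}([T]^g_n)=\{\sum_{j=1}^{n}\lambda_j(C)\lambda_{\pi(j)}(T)\mid\pi\text{ a permutation of }\{1,\dots,n\}\}$ and the statement becomes purely combinatorial. For ``$\subseteq$'' I would extend $\pi$ to a permutation $\sigma$ of $\mathbb N$ and bound the tail by Hölder, $\bigl|\sum_{j>n}\lambda_j(C)\lambda_j(T)\bigr|\le\bigl(\sum_{j>n}|\lambda_j(C)|^p\bigr)^{1/p}\|T\|_q\to0$ uniformly in $\pi$, noting $\sum_{j\ge1}\lambda_j(C)\lambda_{\sigma(j)}(T)\in P_C(T)$. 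For ``$\supseteq$'', given a permutation $\sigma$ of $\mathbb N$ I would pick $K$ with $\bigl(\sum_{j>K}|\lambda_j(C)|^p\bigr)^{1/p}\|T\|_q<\varepsilon/2$, take any $n\ge\max\{K,\sigma(1),\dots,\sigma(K)\}$, extend the injection $\sigma|_{\{1,\dots,K\}}\colon\{1,\dots,K\}\to\{1,\dots,n\}$ to some permutation $\pi$ of $\{1,\dots,n\}$, and estimate $\bigl|\sum_{j=1}^{n}\lambda_j(C)\lambda_{\pi(j)}(T)-\sum_{j\ge1}\lambda_j(C)\lambda_{\sigma(j)}(T)\bigr|\le2\bigl(\sum_{j>K}|\lambda_j(C)|^p\bigr)^{1/p}\|T\|_q<\varepsilon$ via Hölder (for $p=\infty$ the tail reads $\sup_{j>K}|\lambda_j(C)|\cdot\|T\|_\infty$). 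This completes the plan.
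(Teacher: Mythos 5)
The paper itself does not prove this proposition; it defers to \cite[Thm.~3.1 \& 3.6]{dirr_ve} and the addendum, so your argument is necessarily an independent one. Its architecture is sound and, in my view, correct in all its main ideas: the identification of $W_{[C]^e_{2n}}([T]^g_{2n})$ with $\{\operatorname{tr}(CA^\dagger TA)\}$ over partial isometries $A$ from $\operatorname{ran}\Pi^e_{2n}$ onto $\operatorname{ran}\Pi^g_{2n}$ is right; the ``$\subseteq$'' half via unitary extension of $A$ and the estimate $\|\Pi^e_{2n}C\Pi^e_{2n}-C\|_p\|T\|_q\to 0$ is genuinely uniform in $A$ and clean; and the finite-rank reduction plus the polar-decomposition construction of the $A_n$ is a legitimate way to realize the reverse approximation. (One slip: in the $p=\infty$ case of the $C$-spectrum estimate the conjugate exponent is $q=1$, so the tail bound should read $\sup_{j>K}|\lambda_j(C)|\cdot\|T\|_1$, not $\|T\|_\infty$.)

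There is, however, a genuine gap in both ``$\supseteq$'' halves: you correctly state at the outset that Lemma~\ref{lemma_11} requires, for a single $N$ depending only on $\varepsilon$, that \emph{every} point of $\overline{W_C(T)}$ (resp.\ $\overline{P_C(T)}$) lie within $\varepsilon$ of the $n$-th finite-dimensional set for all $n\ge N$ --- but what you actually deliver is pointwise approximation with an $N$ depending on the point. In the $C$-numerical range argument the rate at which $A_nx\to Ux$ is governed by how fast $\Pi^g_{2n}\to\operatorname{id}$ on the subspace $UM$, which moves with $U$; in the $C$-spectrum argument you explicitly require $n\ge\max\{K,\sigma(1),\dots,\sigma(K)\}$, and $\sigma(1)$ can be arbitrarily large as $\sigma$ varies. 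As written, this only shows that the target set is contained in the Kuratowski lower limit, not the uniform statement Lemma~\ref{lemma_11} asks for. The gap is routine to close and you should close it explicitly: since $\overline{W_C(T)}$ and $\overline{P_C(T)}$ are compact (bounded by $\|C\|_p\|T\|_q$ and closed), cover the target set by finitely many $\varepsilon/2$-balls centred at points of $W_C(T)$ (resp.\ $P_C(T)$), apply your pointwise construction to each of the finitely many centres, and take the maximum of the resulting thresholds. With that finite-net step inserted (and the $\|T\|_1$ correction above), the proof is complete.
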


\begin{proof}
For $p=1,q=\infty$ (or vice versa) proofs are given in \cite[Thm.~3.1 \& 3.6]{dirr_ve} which can be adjusted 
to $p,q\in(1,\infty)$ by minimal modifications.
\end{proof}

With these preparations we are ready for proving our main result about the $C$-numerical range of Schatten-class operators.

\begin{proof}[Proof of Theorem \ref{theorem_3}]
(a): For arbitrary orthonormal bases $(e_n)_{n\in\mathbb N}$, $(g_n)_{n\in\mathbb N}$ of $\mathcal H$ as well as any $n\in\mathbb N$, it is readily verified that
\begin{align*}
\frac{\operatorname{tr}([C]^e_{2n})\operatorname{tr}([T]^g_{2n})}{2n} &=\frac{\operatorname{tr}([C]^e_{2n})}{(2n)^{1/q}}\frac{\operatorname{tr}([T]^g_{2n})}{(2n)^{1/p}}\\
&=\Big( \frac{1}{(2n)^{1/q}}\sum\nolimits_{j=1}^{2n} \langle e_j,Ce_j\rangle \Big)\Big( \frac{1}{(2n)^{1/p}}\sum\nolimits_{j=1}^{2n} \langle g_j,Tg_j\rangle \Big)\,.
\end{align*}
Both factors converge and, by Lemma \ref{lemma_0_conv}, at least one of them goes to $0$ as $n\to\infty$. 
Moreover, $W_{[C]^e_{2n}}([T]^g_{2n})$ is star-shaped with respect to 
$(\operatorname{tr}([C]^e_{2n})\operatorname{tr}([T]^g_{2n})/(2n)$ for all $n\in\mathbb N$,
cf.~\cite[Thm.~4]{TSING-96}. Because Hausdorff convergence preserves star-shapedness \cite[Lemma 2.5 (d)]{dirr_ve}, Proposition \ref{lemma_6}
implies that $\overline{W_{C}(T)}$ is star-shaped with respect to $0 \in \mathbb{C}$.\medskip

For what follows let $(e_n)_{n\in\mathbb N},(g_n)_{n\in\mathbb N}$ be the orthonormal bases of $\mathcal H$ which diagonalize $C$ and $T$, respectively.\medskip

(b): W.l.o.g. let $C$ be normal with collinear eigenvalues. Since $C$ is compact (i.e.~its eigenvalue sequence is a null sequence) there exists
$\phi\in[0,2\pi)$ such that $e^{i\phi}C$ is self-adjoint and by Proposition \ref{lemma_6} we obtain
\begin{align*}
\overline{W_C(T)}=\overline{W_{e^{i\phi}C}(e^{-i\phi}T)}=\lim_{n\to\infty} W_{[e^{i\phi}C]_{2n}^e}([e^{-i\phi}T]_{2n}^e)\,.
\end{align*}
Moreover, as $[e^{i\phi}C]_{2n}^e\in\mathbb C^{2n\times 2n}$ is hermitian for all $n\in\mathbb N$ we conclude that $W_{[e^{i\phi}C]_{2n}^e}([e^{-i\phi}T]_{2n}^e)$ is convex, cf.~\cite{article_poon}. The fact that Hausdorff convergence preserves convexity \cite[Lemma 2.5 (c)]{dirr_ve} then yields the desired result.\medskip

(c): The inclusion $P_C(T)\subseteq W_C(T)$ is shown exactly like \cite[Thm.~3.4--first inclusion]{dirr_ve}. For the second inclusion, we note that by assumption $[C]^e_n$ and $[T]^g_n$ are diagonal and thus normal for all $n\in\mathbb N$. Hence \cite[Coro.~2.4]{Sunder82} tells us
\begin{align}\label{eq:wc_pc_incl}
W_{[C]_{2n}^e}([T]_{2n}^g)\subseteq \operatorname{conv}(P_{[C]_{2n}^e}([T]_{2n}^g))
\end{align}
for all $n\in\mathbb N$. Using that Hausdorff convergence preserves inclusions \cite[Lemma 2.5 (a)]{dirr_ve}, \eqref{eq:wc_pc_incl} together with Proposition \ref{lemma_6} yields
\begin{align*}
W_C(T)\subseteq \overline{W_C(T)}=\lim_{n\to\infty} W_{[C]_{2n}^e}([T]_{2n}^g)\subseteq \lim_{n\to\infty} \operatorname{conv}( P_{[C]_{2n}^e}([T]_{2n}^g) )= \operatorname{conv}(\overline{P_C(T)})\,.
\end{align*}
Finally, applying the closure and the convex hull to the inclusions $P_C(T) \subseteq W_C(T)$
yields $\operatorname{conv}(\overline{P_C(T)})\subseteq\operatorname{conv}(\overline{W_C(T)})=\overline{W_C(T)}$, where the last equality is due to (b), and thus 
$\overline{W_C(T)} = \operatorname{conv}(\overline{P_C(T)})$.
\end{proof}

\end{document}